\documentclass[10pt]{amsart}
\usepackage{amsthm}
\usepackage{amsmath}
\usepackage{bbm}
\usepackage{nicefrac}
\usepackage{graphicx}
\usepackage{amssymb}
\usepackage{color}
\usepackage{verbatim}
\usepackage[font=small,skip=5pt]{caption}
\usepackage{enumitem}
\usepackage[makeroom]{cancel}
\usepackage{mathtools}
\usepackage[most]{tcolorbox}
\usepackage[all]{xy}
\usepackage{tikz-cd}
\usepackage{hyperref}
\usepackage{bookmark}
\hypersetup{
	colorlinks=true,
	linkcolor=blue,
	filecolor=magenta,      
	urlcolor=blue,
    citecolor=blue,
}
\usepackage{mathrsfs}
\usepackage{todonotes}

\newtheorem{theorem}{Theorem}[section]
\newtheorem{lemma}[theorem]{Lemma}
\newtheorem{corollary}[theorem]{Corollary}
\newtheorem{proposition}[theorem]{Proposition}
\newtheorem{conjecture}[theorem]{Conjecture}
\newtheorem{problem}[theorem]{Problem}

\newtheorem{condition}{Condition}
\newtheorem{definition}[theorem]{Definition}
\makeatletter
\newtheorem*{innercustomgeneric}{\customgenericname}
\providecommand{\customgenericname}{}
\newcommand{\newreptheorem}[2]{%
  \newenvironment{rep#1}[1]{%
    \def\customgenericname{#2~\ref*{##1}}%
    \begin{innercustomgeneric}
  }{%
    \end{innercustomgeneric}
  }}
\makeatother

\newreptheorem{theorem}{Theorem}

\newcommand{\RR}{\mathbb{R}}
\newcommand{\CC}{\mathbb{C}}
\newcommand{\ZZ}{\mathbb{Z}}
\newcommand{\PP}{\mathbb{P}}

\newcommand{\Pic}[1]{\textrm{Pic}(#1)}

\newcommand{\sshf}[1]{\mathscr{O}_{#1}}

\newcommand{\iso}{\simeq}

\newcommand{\paren}[1]{\left(#1\right)}

\title{On Property $N_p$ of line bundles on smooth projective toric varieties}
\author{Lei Song and Huanqi Wen}

\begin{document}
\maketitle

\begin{abstract}
We establish a criterion for Property $N_p$ for line bundles on a class of smooth projective toric varieties. More precisely, we prove that if a smooth projective toric variety $X$ of dimension $n\ge2$ satisfies the uniform unimodularity condition and the Thomsen stratification intersection-number condition, then any line bundle $L$ on $X$ with $L\cdot C\ge n-1+p$ for every $T$-invariant curve $C$ satisfies Property $N_p$. We also show that these two conditions hold for several families of toric varieties and are preserved under finite products.
\end{abstract}

\footnotetext[1]{2020 \textit{Mathematics Subject Classification}. Primary 14M25; Secondary 52B20, 11P21.
}
\footnotetext[2]{\textit{Key words and phrases}. Projective toric varieties, Property \(N_p\), lattice length.}

\section{Introduction}
Let $X$ be a projective variety and $L$ a basepoint-free line bundle on $X$. Property $N_p$ of $L$ can be defined as follows. Set $S=\text{Sym}^{\bullet}H^0(X,L)$ and $R(X,L)=\bigoplus_{m\ge 0} H^0(X, L^m)$ with the natural $S$-module structure. Let $E_{\bullet}$ be a minimal graded free resolution of $R(X,L)$. Then 
\begin{definition}
    $L$ satisfies Property $N_p$ for some integer $p\ge 0$ if
    \begin{enumerate}
        \item $E_0=S$, and
        \item $E_i=S(-i-1)^{\oplus b_i}$ for some $b_i\in \mathbb{N}$, for each $1\le i\le p$. 
    \end{enumerate}
\end{definition}
Property $N_p$ measures the complexity of the minimal free resolution of the section ring $R(X,L)$. In particular, in case $L$ is very ample, $N_0$ corresponds to projective normality, while $N_1$ further requires that the defining ideal of $X$ be generated by quadrics. Higher Properties $N_p$ imposes linearity conditions on higher syzygies. 

A central problem is to find effective positivity conditions ensuring that a given line bundle satisfies Property $N_p$. For example, Green proved that on a smooth projective curve of genus $g$, a line bundle of degree at least $2g+1+p$ satisfies $N_p$ \cite{Green1984}, extending the classical result of Castelnuovo, Mattuck and Mumford on normal generation. In higher dimensions, Mukai conjectured that for a smooth projective variety $X$ of dimension $n$ and an ample divisor $A$, the adjoint line bundle $\sshf{X}(K_X + rA)$ satisfies $N_p$ whenever $r \geq n+2+p$. This conjecture has been verified in several important cases; for example, \cite{EinLazarsfeld1993arbitrarydim, BangereLacini2025}, but is still widely open. 

Turning to projective toric varieties, the study of Property $N_p$ has also attracted considerable attention. Every $n$-dimensional lattice polytope $P$ determines an $n$-dimensional projective toric variety $X_P$ together with a torus-invariant ample divisor $D_P$. We say that $P$ satisfies Property $N_p$ if the associated line bundle $\sshf{X_P}(D_P)$ does. In dimension two, Koelman \cite{Koelman1993, Koelman1993Generators} showed that every lattice polygon satisfies $N_0$, and provided criteria for $N_1$ in terms of boundary lattice points. Schenck \cite{Schenck2004} further proved that if a polygon contains at least $p+3$ lattice points, then it satisfies $N_p$. In higher dimensions, one research  focus has been to consider powers $L=A^{\otimes d}$ of a given ample line bundle $A$. Ogata and Nakagawa \cite{OgataNakagawa2002} proved that $L=A^{\otimes d}$ satisfies $N_0$ for $d \geq n-1$ and $N_1$ for $d \geq n$, and this was extended by Hering, Schenck, and Smith \cite{HeringSchenckSmith2006} to any  $N_p$ for $d \geq n-1+p$. Note that the polytope associated with $(X,A^{\otimes d})$ is the $d$-fold dilation of the polytope associated with $(X,A)$. For general lattice polytopes, Gubeladze \cite{Gubeladze2012} established $N_0$ in terms of
the lattice length $L(P)$ (see \S 3 for definition). Specifically, he showed that $P$ satisfies $N_0$ if $L(P)\ge 4n(n+1)$ and that this bound can
be relaxed to $L(P)\ge n(n+1)$ when $P$ is a  simplex.

Motivated by Mukai's conjecture, the first author and Zhixian Zhu proposed the following conjecture; see \cite[\S 5]{SWZ26}: 
\begin{conjecture}[Song-Zhu]\label{conj:Song-Zhu}
    Let $X$ be a projective toric variety of dimension $n\ge 2$ and $L$ a line bundle on $X$. If $L \cdot C \geq n-1+p$ for every $T$-invariant curve $C \subset X$, then $L$ satisfies Property $N_p$.
\end{conjecture}
We note that, by \cite{GZ22}, if $L$ satisfies $L\cdot C\ge n-1$ for every $T$-invariant curve $C$, then $L$ is very ample.  
When $p=0$, the conjecture can be reformulated as follows: Let $P$ be an $n$-dimensional lattice polytope with lattice length $L(P) \geq n-1$, then for any integer $r\ge 1$, every lattice point in $rP$ can be decomposed to a sum of $r$ lattice points in $P$. This decomposition property has been established when $P$ is a simplex \cite{SWZ26, SWZ24}.

While Conjecture \ref{conj:Song-Zhu} is supported by the evidence discussed above, no general result has yet been proven to hold for arbitrary $n, p$ and line bundles $L$ satisfying the positivity constraint. In this article, we verify Conjecture \ref{conj:Song-Zhu} under additional hypotheses on the toric variety. More specifically, we prove the following result:

\begin{theorem}\label{thm: main theorem}
    Let $X$ be a smooth projective toric variety of dimension $n\ge 2$ satisfying the uniform unimodularity condition and the Thomsen stratification intersection-number condition. 
    Let $p$ be a non-negative integer. Given a line bundle $L$ on $X$ such that $L\cdot C\ge n-1+p$ for every $T$-invariant curve $C$ on $X$, then $L$ satisfies Property $N_p$.
\end{theorem}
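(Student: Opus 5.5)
We plan to follow the Koszul cohomology approach used by Hering–Schenck–Smith, combined with the Thomsen (Frobenius) splitting of direct images of line bundles on smooth toric varieties. Recall that $L$ satisfies $N_p$ if, for the syzygy (kernel) bundle $M_L=\ker\bigl(H^0(X,L)\otimes\sshf{X}\to L\bigr)$, we have
\[
H^1\bigl(X,\wedge^{i}M_L\otimes L^{\otimes j}\bigr)=0\quad\text{for } 1\le i\le p+1,\ j\ge1 .
\]
In characteristic zero, $\wedge^i M_L$ is a direct summand of $M_L^{\otimes i}$. So it suffices to prove $H^1(X,M_L^{\otimes i}\otimes L^{j})=0$ in this range. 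By the standard filtration and induction argument, this in turn reduces to surjectivity of multiplication maps of the form
\[
H^0(X,L)\otimes H^0(X,M_L^{\otimes(i-1)}\otimes L^{j})\to H^0(X,M_L^{\otimes(i-1)}\otimes L^{j+1}),
\]
together with vanishing of higher cohomology of nef bundles, which holds on toric varieties. This is the strategy of \cite{HeringSchenckSmith2006}. There, regularity with respect to a collection of nef line bundles is used, and $L=A^{\otimes d}$ with $d\ge n-1+p$ yields the bound.

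Our first step is to translate the hypothesis $L\cdot C\ge n-1+p$ for all $T$-invariant $C$ into a decomposition of $L$. Since $X$ is smooth, each edge of the polytope $P_L$ has lattice length $L\cdot C$. We want to write
\[
L=\sshf{X}(K_X)^{-1}\otimes\bigotimes_k N_k \qquad\text{or}\qquad L=L_1\otimes\cdots\otimes L_{n-1+p},
\]
with each $L_k$ nef. Here the uniform unimodularity condition should be what guarantees that a polytope all of whose edge lengths are at least $m$ is a Minkowski sum of $m$ lattice polytopes with the same normal fan. Concretely, $P_L-mQ$ should remain a lattice polytope for some "unit" polytope $Q$; unimodularity ensures the vertices of the shrunk polytope are integral.

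Second, we apply Thomsen's theorem: for the Frobenius-type map $F_r\colon X\to X$, the direct image $(F_r)_*\mathcal{O}(D)$ splits as a sum of line bundles $\mathcal{O}(D')$. The Thomsen stratification intersection-number condition is exactly what we need to control the summands: every line bundle $D'$ appearing in the strata satisfies $D'\cdot C\ge -1$ (or a similar bounded negativity) on invariant curves. Combining this with Fujita-type vanishing for toric varieties, the Mustaţă vanishing theorem, gives
\[
H^q\bigl(X,\mathcal{O}(D')\otimes L_1\otimes\cdots\otimes L_t\bigr)=0 \quad\text{for } q>0 .
\]
These vanishings provide the Castelnuovo–Mumford regularity statements needed in the induction: $M_L^{\otimes(i-1)}\otimes L^{j}$ is 0-regular with respect to $L_1,\dots,L_{n-1+p}$. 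The count $n-1+p$ enters as $n-1$ twists to kill $H^q$ for $q$ up to $n-1$, plus one extra twist for each tensor factor of $M_L$.

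The main obstacle is the first step: producing the decomposition of $L$ into $n-1+p$ nef summands, each adapted to the regularity argument, from edge-length data alone. For general toric varieties this fails, and that failure is the whole difficulty of Conjecture~\ref{conj:Song-Zhu}. We would attempt it by showing that, under uniform unimodularity, the nef cone is generated by classes $N$ with $N\cdot C\in\{0,1\}$, and that $L-N$ still satisfies the edge bound with $m$ replaced by $m-1$. Iterating this peeling yields the decomposition. The second step, the regularity/vanishing argument, should then follow the template of \cite{HeringSchenckSmith2006} with only bookkeeping changes.
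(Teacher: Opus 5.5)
Your proposal is not yet a proof. The step you flag as the main obstacle is left open, and the roles you assign to the two hypotheses are not the ones that make the theorem work.

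\textbf{The decomposition step.} As stated, the peeling is vacuous: $N=0$ is nef with $N\cdot C\in\{0,1\}$. What you would really need is a factorization into \emph{ample} pieces, and uniform unimodularity does not supply a ``unit'' polytope $Q$. For $A_{2,1}$ (the Hirzebruch surface $F_1$, which satisfies both conditions), the ample polygons are, up to translation, $\{x,y\ge 0,\ c\le x+y\le a\}$ with $0<c<a$. Their edge lengths $a-c,\,a-c,\,c,\,a$ can never all equal $1$.

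\textbf{The regularity step.} Even granting some factorization $L=L_1\otimes\cdots\otimes L_{n-1+p}$ into ample bundles, you do not explain how $M_L^{\otimes(i-1)}\otimes L^{j}$ would become $0$-regular with respect to the $L_k$. The count in \cite{HeringSchenckSmith2006} is for a fixed collection of globally generated bundles. It rests on the multigraded regularity of $\sshf{X}$ with respect to that collection, that is, on vanishing of $H^q$ of bundles with negative exponents. Edge lengths alone do not control these groups.

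\textbf{The intersection-number condition.} This is not a bounded-negativity statement $D'\cdot C\ge -1$. By Lemma~\ref{le: A(X)=1} it reads $d_\sigma\cdot C\le\dim\sigma$, and the negativity genuinely grows with the stratum: the Thomsen collection of $\PP^n$ is $\mathcal{O},\mathcal{O}(-1),\dots,\mathcal{O}(-n)$.

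\textbf{The missing idea} is to work with diagonals rather than with $M_L$, so that the Thomsen bundles appear directly as terms of a resolution.

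The paper uses Green's criterion in the form of Proposition~\ref{prop: Green's criterion}: it proves $H^1\bigl(X(r),\mathcal{I}^1_{\Delta_{X(r)}}(r)\otimes L^{\otimes k}\boxtimes L^{\boxtimes r-1}\bigr)=0$ for $2\le r\le p+2$ and $k\ge 1$. Each $\mathcal{I}_{\Delta^{1,j}_{X(r)}}$ is resolved by the Hanlon--Hicks--Lazarev resolution, whose terms are $\mathcal{O}_X(-d_\sigma)\boxtimes\mathcal{O}_X(-d_{-\sigma})$, indexed by strata $\sigma\in S_X$.

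This is exactly where uniform unimodularity enters. By Proposition~\ref{prop: equivalent condition for trivial 0 stratum}, $\{0\}$ is the only $0$-dimensional stratum. So the $0$-th term is $\mathcal{O}$, and deleting it resolves the ideal sheaf by strata of dimension $\ge 1$.

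After tensoring the resolutions and applying K\"unneth, Lemma~\ref{le: cohomology vanishing for Thomsen collection} pushes all cohomology onto the first factor. What remains is the vanishing of $H^{s+1}\bigl(X,\mathcal{O}_X(-\sum_{i=1}^{r-1}d_{\sigma_i})\otimes L^{k}\bigr)$, where $\sum_i(\dim\sigma_i-1)=s\le n-1$.

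The intersection-number condition then gives
$\sum_{i\le r-2}d_{\sigma_i}\cdot C\le\sum_{i\le r-2}\dim\sigma_i\le n+r-3\le n-1+p\le L\cdot C$.
Hence $L^{k}-\sum_{i\le r-2}d_{\sigma_i}$ is nef, and the remaining $-d_{\sigma_{r-1}}$ is absorbed by the same lemma.

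No decomposition of $L$ and no regularity argument are needed: the edge bound is compared directly with the numbers $d_\sigma\cdot C$.
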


Our approach combines cohomological methods with the combinatorial structure of toric varieties. Using Green’s criterion, we reduce the problem to the vanishing of certain cohomology groups of ideal sheaves of the diagonals. To make these vanishings tractable, we employ the Hanlon-Hicks-Lazarev resolution \cite{HanlonHicksLazarev2024} together with the Thomsen collection, which allows us to control the cohomology via intersection numbers. 

In Proposition \ref{prop: example}, we show that three families of toric varieties satisfy the additional conditions: the blow-up of $\PP^n$ at $k$ points in general position, for $0\le k\le n+1$; the blow-up of $\PP^n$ along $\PP^k$, for $0\le k\le n-1$; and projective bundles $\mathbb{P}(\mathcal{O}_{\PP^s}^{\oplus r_1}\oplus \mathcal{O}_{\PP^s}(1)^{\oplus r_2})$, for $s, r_1, r_2\ge 1$. Moreover, we show that these two conditions are preserved under taking finite products, see Propositions \ref{prop: product with cond 1}, \ref{prop:product with cond 2}.
\vspace{1mm}

\textbf{Acknowledgments}.
We are grateful to Zhixian Zhu for the collaboration on related projects and for many helpful discussions. We also thank Lawrence Ein, Lauren Cranton Heller and Jeff Hicks for helpful discussions and communications. 
During the preparation of the article, L.S. was partially supported by NSFC grant No.~12471043 and  Guangdong Basic and Applied Basic Research Foundation No.~ 2025A1515012258. 

\section{Hypotheses for the Main Theorem}
In this section, we first introduce the Thomsen collection and the Thomsen stratification. We then present the two conditions appearing in our main theorem, and explain their motivation as well as their relation to the Thomsen collection and the Thomsen stratification. Finally, we provide several examples of toric varieties satisfying these conditions. 

To begin with, we fix some notation, and refer the reader to \cite{FultonIntroToToric, CoxToric}. Let $N\cong \ZZ^n$ be a lattice of rank $n$ and $M=\mathrm{Hom}(N,\ZZ)$ be the dual lattice. We write $\langle . , . \rangle: M\times N\rightarrow \mathbb{Z}$ for the natural pairing. Let $\Sigma$ be a fan in $N_\RR=N\otimes_\ZZ \RR$ and $X=X_\Sigma$ be the associated toric variety. Then $X$ is a normal variety containing a dense open torus $T=\mathrm{Spec}(\CC[M])\cong (\CC^*)^n$, and the natural action of $T$ on itself extends to an action of $T$ on $X$. For integer $k\ge 0$, $\Sigma(k)$ denotes the set of $k$-dimensional cones. For each ray $\rho\in \Sigma(1)$, we let $u_\rho\in N$ denote its primitive generator.

Throughout, we work over an algebraically closed field, and we always assume that $X$ is a smooth toric variety, unless otherwise specified.

\subsection{Thomsen collection and Thomsen stratification}
For a smooth toric variety $X$, one can construct a collection of line bundles on $X$, called the Thomsen collection. 

\begin{definition}
	For each $\theta\in M_{\RR}$, define
	\[d(\theta):=\sum_{\rho\in \Sigma(1)}\lceil\langle \theta, u_{\rho}\rangle\rceil D_{\rho}.\]
	It follows that
    \begin{equation}
        \label{eq: -d(theta)}
        -d(\theta)=\sum_{\rho\in \Sigma(1)}\lfloor\langle -\theta, u_{\rho}\rangle\rfloor D_{\rho}.
    \end{equation}
	Thus we obtain a map
	\[
		\begin{aligned}
			-d: M_{\RR}/M & \to \Pic{X}\\
			[\theta] & \mapsto \mathcal{O}_X(-d(\theta)),
		\end{aligned}
	\]
	where $[\theta]$ denotes the class of $\theta$ in $M_{\RR}/M$. The image of $-d$ is called the Thomsen collection, denoted by $\Theta_X$.
\end{definition}
% By abuse of notation, we also write $d(\theta)$ for the line bundle $\mathcal{O}_X(d(\theta))$, and $-d(\theta)$ for $\mathcal{O}_X(-d(\theta))$.

In \cite{ballard2025kingsconjecturecoxcategory}, the authors prove the following vanishing property of the elements of $\Theta_X$.

\begin{lemma}
    \label{le: cohomology vanishing for Thomsen collection}
Assume that $|\Sigma|$ is convex and that $\Sigma$ is simplicial. Let $A=\sum_{\rho\in \Sigma(1)} a_\rho D_\rho$ be a nef divisor on $X=X_{\Sigma}$. Then for any $\mathcal{O}_X(-d(\theta))\in\Theta_X$ and any integer $p>0$, one has
\[
H^p(X,\mathcal{O}_X(A-d(\theta)))=0.
\]
\end{lemma}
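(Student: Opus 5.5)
We compute the cohomology torically, through its decomposition into $M$-graded pieces. Write $D=A-d(\theta)=\sum_\rho b_\rho D_\rho$ with $b_\rho=a_\rho-\lceil\langle\theta,u_\rho\rangle\rceil$. The standard description of the cohomology of a torus-invariant divisor (see \cite{CoxToric}) gives
\[
H^p(X,\mathcal{O}_X(D))=\bigoplus_{m\in M}\widetilde{H}^{p-1}(V_{D,m},\CC),
\]
where $V_{D,m}\subseteq|\Sigma|$ is the union of those cones $\sigma\in\Sigma$ such that $\langle m,u_\rho\rangle<-b_\rho$ for some ray $\rho$ of $\sigma$. This holds for simplicial fans, up to the usual homotopy-equivalence caveats. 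Equivalently, $V_{D,m}$ is homotopy equivalent to the set of points $u\in|\Sigma|$ where the piecewise-linear function $\psi_{D,m}(u)=\langle m,u\rangle+\psi_D(u)$ is negative. Here $\psi_D$ is the function that is linear on each cone and takes the value $b_\rho$ at $u_\rho$. So it suffices to show that for every $m\in M$ this negativity region is either empty or contractible, since then all reduced cohomology vanishes in positive degrees.

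The key step is to rewrite the negativity condition using the rounding. Since $\langle m,u_\rho\rangle\in\ZZ$, the condition $\langle m,u_\rho\rangle+a_\rho-\lceil\langle\theta,u_\rho\rangle\rceil<0$ is equivalent to $\langle m-\theta,u_\rho\rangle+a_\rho<0$, because $\lceil x\rceil>k$ iff $x>k$ for an integer $k$. So on rays the relevant region is controlled by the function $\phi(u)=\langle m-\theta,u\rangle+\psi_A(u)$. This is an $\RR$-linear function plus the support function of the nef divisor $A$. Because $A$ is nef and $|\Sigma|$ is convex, $\psi_A$ is convex (with the paper's sign conventions; we flip signs if necessary), so $\phi$ is convex on the convex set $|\Sigma|$. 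Hence $\{u\in|\Sigma|:\phi(u)<0\}$ is convex: it is empty or contractible, and it is a cone since $\phi$ is homogeneous.

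The remaining work, and the main obstacle, is to relate the combinatorial complex $V_{D,m}$, which is defined through rays, to the open convex cone $\{\phi<0\}$. We would show that $V_{D,m}$ deformation retracts onto, or is homotopy equivalent to, the subcomplex spanned by the rays with $\phi(u_\rho)<0$. Then we compare the latter with $\{\phi<0\}\cap|\Sigma|$. Simpliciality lets us work on the link, i.e. on a triangulated sphere or ball in which each cone is a simplex. The full subcomplex on the vertices where a convex function is negative should be homotopy equivalent to the sublevel set. We would first argue this cone by cone: on a simplicial cone, $\phi$ is bounded above by the linear interpolation of its values at the vertices, and this interpolation is $\psi$ restricted to the cone. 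Therefore $\{\psi<0\}\subseteq\{\phi<0\}$ on that cone, and the set where the interpolation is negative retracts straight-line onto the face spanned by the negative vertices. Gluing these retractions gives the homotopy equivalence $V_{D,m}\simeq\{\psi_{D,m}<0\}$. To conclude contractibility we must also compare $\{\psi_{D,m}<0\}$ with the convex set $\{\phi<0\}$. For this we would use a nerve/Quillen fiber argument, checking that on each cone the intersection with $\{\phi<0\}$ is nonempty exactly when the cone has a negative vertex, and that these intersections are convex. The trivial case $m$ with all vertices nonnegative gives the empty set, which is harmless for $p>0$, because the $p-1\ge0$ reduced cohomology of the empty set is only nonzero in degree $-1$.
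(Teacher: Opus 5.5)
Your route differs from the paper's: the paper gives no argument for this lemma and simply quotes it from \cite{ballard2025kingsconjecturecoxcategory}. Your key idea is right. Since $\langle m,u_\rho\rangle+a_\rho\in\ZZ$, the condition $\langle m,u_\rho\rangle<-b_\rho$ is equivalent to $\langle m-\theta,u_\rho\rangle+a_\rho<0$. So the negative rays of $A-d(\theta)$ at $m\in M$ are exactly the negative rays of $A$ at the real character $m-\theta$, and the lemma becomes Demazure vanishing at a non-lattice point; this is also where the ceiling in $d(\theta)$ is essential.

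However, the step you flag as the main obstacle is argued incorrectly, so as written there is a gap.

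\emph{First}, $V_{D,m}$ is not the union of the cones having some negative ray. Taken literally, that set contains $0$ and is star-shaped, hence contractible whenever nonempty, which would force $H^1(\PP^1,\mathcal{O}_{\PP^1}(-2))=0$. In \cite{CoxToric} it is $\bigcup_{\sigma\in\Sigma}\operatorname{Conv}(u_\rho:\rho\in\sigma(1),\ \langle m,u_\rho\rangle<-b_\rho)$, which is already the full subcomplex on the negative rays.

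\emph{Second}, and more seriously, your cone-by-cone comparison is backwards. The function $\psi_A$ is linear on every cone of $\Sigma$, hence so is $\phi$. Thus ``bounded above by the linear interpolation'' is an equality, and that interpolation is $\phi$ itself, not $\psi_{D,m}$. Since $\lceil x\rceil\ge x$, one has $\psi_{D,m}(u_\rho)\le\phi(u_\rho)$ for all $\rho$, so the true inclusion is $\{\phi<0\}\subseteq\{\psi_{D,m}<0\}$. Your inclusion fails in general: on $\PP^2$ with rays $e_1,e_2,u_0=-e_1-e_2$, $A=0$, $\theta=(\tfrac12,\tfrac12)$, $m=0$, the point $e_1+0.7u_0$ has $\psi_{D,m}=-0.3$ but $\phi=0.2$.

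Convexity alone cannot rescue the principle you invoke either. A convex homogeneous function can be positive on every ray yet negative somewhere, e.g.\ $|u_1-u_2|-\tfrac12(u_1+u_2)$ on the fan of $\PP^1\times\PP^1$. The nerve/Quillen comparison is never carried out, so the homotopy equivalence $V_{D,m}\simeq\{\phi<0\}$ is not established.

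The fix is short and makes $\psi_{D,m}$ unnecessary. Let $f$ be any function linear on each cone of the simplicial fan $\Sigma$, and write $u=\sum_\rho t_\rho u_\rho$ using the generators of the smallest cone containing $u$.
\begin{enumerate}
\item The homotopy $u\mapsto\sum_{f(u_\rho)<0}t_\rho u_\rho+(1-s)\sum_{f(u_\rho)\ge0}t_\rho u_\rho$, $s\in[0,1]$, only decreases $f$, so it stays in $\{f<0\}$.
\item Follow it by the radial homotopy rescaling the result so that its coefficients $t_\rho$ sum to $1$.
\end{enumerate}
Together these deformation retract $\{u\in|\Sigma|:f(u)<0\}$ onto $\bigcup_\sigma\operatorname{Conv}(u_\rho:\rho\in\sigma(1),\ f(u_\rho)<0)$, compatibly on common faces.

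Now take $f=\phi$. By your rounding observation, this subcomplex is exactly $V_{D,m}$, so $V_{D,m}\simeq\{u\in|\Sigma|:\phi(u)<0\}$. This set is convex because $\phi$ is convex on the convex set $|\Sigma|$, so it is empty or contractible, and $\widetilde{H}^{p-1}(V_{D,m},\CC)=0$ for $p\ge 1$.

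With this substitution your argument becomes a complete, self-contained proof of the lemma. It reduces the statement transparently to Demazure vanishing, whereas the paper relies entirely on the citation.
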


The Thomsen stratification is a stratification of the real torus $T_M:=M_{\RR}/M$, which was introduced by Bondal in \cite{Bondal06}. More precisely, for each $\rho\in \Sigma(1)$, we define
\begin{equation}
\label{eq: chi_rho}
	\begin{aligned}
		\chi_{\rho}: T_M & \to \RR/\ZZ\\
		[m]& \mapsto \langle m, u_{\rho}\rangle \bmod \ZZ, 
	\end{aligned}
\end{equation}
and set $T_M(\rho):=\ker(\chi_{\rho})$. Then $T_M(\rho)$ is a hyperplane of $T_M$. The set of toric hyperplanes $\{T_M(\rho)\mid \rho\in \Sigma(1)\}$ induces a stratification of $T_M$, denoted by $S_X$, called the \textit{Thomsen stratification}.

For each stratum $\sigma$ in the Thomsen stratification, choose $\theta\in \sigma$. 
We then associate to $\sigma$ the line bundle 
\begin{equation}
    \label{eq: def of d_sigma}
    \mathcal{O}_X(-d_{\sigma}):=\mathcal{O}_X(-d(\theta)),
\end{equation}
which is an element of the Thomsen collection. It is easy to check that $\mathcal{O}_X(-d_{\sigma})$ is independent of the choice of $\theta\in \sigma$. By abuse of notation, we also write $-d_{\sigma}$ and $d_\sigma$ for a divisor representing the associated line bundle.

%Then the shape of the HHL resolution for diagonal toric varieties is determined by the Thomsen stratification

\subsection{Uniform unimodularity condition}
\begin{condition}[Uniform unimodularity condition]
	We say that $X$ satisfies the uniform unimodularity condition if for any $n$ linearly independent one-dimensional cones $\rho_1,\dots,\rho_n$ in the fan $\Sigma$ (where linearly independent means that their primitive generators are linearly independent in $N_{\mathbb{R}}$), the primitive generators $u_{\rho_1},\dots,u_{\rho_n}$ form a basis of $N$, that is, $\det(u_{\rho_1},\ldots,u_{\rho_n})=\pm 1$. 
\end{condition}

%This condiition is closely related to shape of HHL resolution for diagonal toric varieties. To be more precise,
Note that the uniform unimodularity condition is strictly stronger than smoothness for toric varieties. The following proposition shows the relationship between the uniform unimodularity condition and the Thomsen stratification. 
\begin{proposition}
	\label{prop: equivalent condition for trivial 0 stratum}
	Let $X$ be a smooth toric variety. The following are equivalent:
	\begin{enumerate}
		\item $X$ satisfies the uniform unimodularity condition;
		\item the Thomsen stratification $S_X$ has only one 0-dimensional stratum, which equals $\{0\}$.
	\end{enumerate}
\end{proposition}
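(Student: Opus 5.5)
The plan is to identify the 0-dimensional strata of $S_X$ with the points of $T_M=M_\RR/M$ cut out by $n$ independent hyperplanes, and then compare those points with the lattice generated by the corresponding rays. The strata of $S_X$ are the connected components of the sets obtained by intersecting some of the subtori $T_M(\rho)$ and removing the others. A 0-dimensional stratum is therefore an isolated point of some intersection $\bigcap_{\rho\in I}T_M(\rho)$. Each $T_M(\rho)$ contains $0$, so $\{0\}$ is always a 0-dimensional stratum: since $X$ is smooth and complete (or has a full-dimensional cone), some $n$ rays are linearly independent, and $0$ is isolated in the intersection of their subtori. So condition (2) says exactly that there are no other isolated intersection points.

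Fix linearly independent rays $\rho_1,\dots,\rho_n$ and let $N'\subseteq N$ be the sublattice spanned by $u_{\rho_1},\dots,u_{\rho_n}$. The intersection $\bigcap_{i}T_M(\rho_i)$ consists of the classes $[m]$ with $\langle m,u_{\rho_i}\rangle\in\ZZ$ for all $i$. That is the image of the dual lattice $(N')^\vee=\{m\in M_\RR: \langle m,N'\rangle\subseteq \ZZ\}$ in $M_\RR/M$, namely the finite group $(N')^\vee/M\cong \mathrm{Hom}(N/N',\QQ/\ZZ)$. Its order is $|\det(u_{\rho_1},\dots,u_{\rho_n})|$, and this is the key computation.

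For (1)$\Rightarrow$(2): let $\sigma$ be a 0-dimensional stratum with point $[m]$. The set $I=\{\rho:\chi_\rho([m])=0\}$ must have $\{u_\rho\}_{\rho\in I}$ spanning $N_\RR$. Otherwise there is a direction $v\ne0$ in $M_\RR$ annihilating all $u_\rho$, $\rho\in I$. Then $[m+tv]$ stays in every $T_M(\rho)$ with $\rho\in I$, and for small $t$ it stays off the other hyperplanes by openness, which contradicts $\sigma$ being a point. So we may pick $n$ independent rays in $I$. By (1), $N'=N$, so $(N')^\vee=M$, the intersection is $\{0\}$, and $[m]=0$.

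For (2)$\Rightarrow$(1): suppose some independent $\rho_1,\dots,\rho_n$ have $|\det|>1$. Then there is a nonzero point $[m]\in(N')^\vee/M$. Since $(N')^\vee/M$ is finite, $[m]$ is isolated in $\bigcap_i T_M(\rho_i)$. The stratum containing $[m]$ lies inside this intersection, so it is a 0-dimensional stratum different from $\{0\}$. This contradicts (2).

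The main obstacle is making precise what "stratum" means, namely connected components of the locally closed pieces, and justifying that a stratum is a point exactly when the vanishing hyperplanes at it span. The perturbation argument above handles this step. The rest is the standard fact that $[(N')^\vee:M]=[N:N']=|\det|$.
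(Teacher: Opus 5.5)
Your proof is correct and follows essentially the paper's argument. Both directions rest on identifying $\bigcap_{i} T_M(\rho_i)$ with the finite group $\operatorname{Hom}(N/N',\RR/\ZZ)$ of order $|\det(u_{\rho_1},\dots,u_{\rho_n})|$, and (2)$\Rightarrow$(1) is the same ``pick a nonzero point of this group and look at its stratum'' argument.

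There are two small differences. In (1)$\Rightarrow$(2) you replace the paper's dimension lemma ($\dim\sigma=\operatorname{rank}N-\operatorname{rank}N_J$) with a direct one-parameter perturbation showing that the rays vanishing at the point must span $N_\RR$. You also check explicitly that $\{0\}$ is a $0$-dimensional stratum when the rays span $N_\RR$ (e.g.\ $X$ complete). The paper leaves this implicit, and it genuinely needs that hypothesis: for $X=(\CC^*)^n$, condition (1) holds vacuously while $S_X$ has no $0$-dimensional strata.
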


Before proving this proposition, we give a lemma. 
First, for each stratum $\sigma$ in $S_X$, it is given by a connected component of 
\[
R_J:=\bigcap_{\rho\in J} T_M(\rho)\setminus \bigcup_{\rho\in\Sigma(1)\setminus J} T_M(\rho),
\]
where $J\subseteq \Sigma(1)$. Then define the sub-lattice of $N$
\[
N_J=\sum_{\rho\in J} \mathbb{Z} u_{\rho}.
\]

\begin{lemma}
    \label{prop: dimension of strata}
    Let $\sigma\neq \emptyset$ be a stratum given by a connected component of $R_J$. Then
    \begin{enumerate}
        \item if $\rho\in \Sigma(1)\setminus J$, then $u_{\rho}\notin N_J$;
        \item There is a canonical isomorphism of topological groups 
        \[G:=\bigcap_{\rho\in J} T_M(\rho)\iso \operatorname{Hom}_{\mathbb{Z}}(N/N_J,\mathbb{R}/\mathbb{Z});\]
        \item $\dim \sigma=\operatorname{rank}(N)-\operatorname{rank}(N_J)$. In particular, every connected component of $R_J$ has the  same dimension.
    \end{enumerate}
\end{lemma}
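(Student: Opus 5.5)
The plan is to identify the stratum concretely and then do everything through duality of lattices. Write $\pi: M_\RR \to T_M$ for the quotient map.

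For $[m]\in T_M$ and $\rho\in J$, the condition $[m]\in T_M(\rho)$ says exactly that $\langle m,u_\rho\rangle\in\ZZ$. Hence $[m]\in G$ if and only if the functional $\langle m,-\rangle$ takes integer values on $N_J$.

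\textbf{Part (2).} Given $[m]\in G$, the functional $\langle m,-\rangle \bmod \ZZ$ kills $N_J$, so it descends to a homomorphism $N/N_J\to\RR/\ZZ$. Changing $m$ by an element of $M$ does not change this homomorphism, so we get a well-defined continuous group homomorphism
\[
\Phi: G \to \operatorname{Hom}(N/N_J,\RR/\ZZ).
\]
\begin{itemize}
\item Injectivity: if $\langle m,-\rangle \bmod \ZZ$ vanishes on all of $N$, then $m\in M$.
\item Surjectivity: a homomorphism $N/N_J\to\RR/\ZZ$ pulls back to some $\phi: N\to\RR/\ZZ$. Since $N$ is free, $\phi$ lifts to $\tilde\phi: N\to\RR$, and $\tilde\phi$ is given by some $m\in M_\RR$.
\end{itemize}
Both sides are compact Hausdorff, so the continuous bijection $\Phi$ is a homeomorphism.

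\textbf{Part (3).} Write $N/N_J\cong \ZZ^{r}\oplus F$ with $r=\operatorname{rank}N-\operatorname{rank}N_J$ and $F$ finite. Then
\[
\operatorname{Hom}(N/N_J,\RR/\ZZ)\cong (\RR/\ZZ)^r\times \widehat F,
\]
which is a finite disjoint union of $r$-dimensional real tori, i.e. a closed submanifold of dimension $r$.

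Now $R_J$ is $G$ minus the finitely many closed subgroups $G\cap T_M(\rho)$ for $\rho\notin J$. Each connected component of $R_J$ is therefore open in $G$, so it is nonempty open in some $r$-dimensional torus component and has dimension $r$. This is the only step needing care: we must know the removed sets do not contain a whole component, but this follows automatically since $\sigma\neq\emptyset$ and $\sigma$ is open in $G$.

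\textbf{Part (1).} Suppose $\rho\notin J$ but $u_\rho\in N_J$. Take $[m]\in\sigma$. Then $\langle m,u_\rho\rangle$ is an integer combination of the integers $\langle m,u_{\rho'}\rangle$ for $\rho'\in J$, so it is itself an integer. Thus $[m]\in T_M(\rho)$, contradicting $[m]\in R_J$. So $u_\rho\notin N_J$.

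Nothing here is a real obstacle. The only point to get right is the lifting argument in surjectivity, which uses that $N$ is free and hence that $\operatorname{Hom}(N,\RR)\to\operatorname{Hom}(N,\RR/\ZZ)$ is surjective.
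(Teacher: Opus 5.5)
Your proof is correct. Parts (1) and (2) follow the paper's argument. For (1), the paper likewise notes that $u_\rho\in N_J$ would force $T_M(\rho)\supseteq G$ and hence $R_J=\emptyset$. For (2), the paper obtains your map $\Phi$ in two steps: first the identification $T_M\cong\operatorname{Hom}_{\ZZ}(N,\RR/\ZZ)$, then the inverse of $q^*\colon\operatorname{Hom}_{\ZZ}(N/N_J,\RR/\ZZ)\to\operatorname{Hom}_{\ZZ}(N,\RR/\ZZ)$. It uses the same lifting through $\operatorname{Hom}_{\ZZ}(N,\RR)$ and the same compact-to-Hausdorff principle.

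Part (3) is where you take a genuinely different route. The paper removes the walls $T_M(\rho')$, $\rho'\notin J$, one at a time. It splits into cases according to whether $\bar u_{\rho'}\in N/N_J$ is torsion or of infinite order:
\begin{itemize}
\item if $\bar u_{\rho'}$ is torsion, $G\cap T_M(\rho')$ is a union of whole components of $G$;
\item otherwise, $G\cap T_M(\rho')$ has codimension one in each component it meets.
\end{itemize}
It then argues inductively that every remaining component stays $r$-dimensional.

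You instead observe that $R_J$ is $G$ minus finitely many closed subsets, hence open in $G$. So each component of $R_J$ is a nonempty open subset of the $r$-manifold $G\cong(\RR/\ZZ)^r\times\widehat F$, and is therefore $r$-dimensional. Your version is shorter and needs no description of how the walls meet $G$, only $\sigma\neq\emptyset$. That is just as well: in the infinite-order case the intersection with a component can consist of several parallel codimension-one subtori, not the single subtorus the paper describes. That inaccuracy is harmless there, but your argument sidesteps it entirely. The paper's case analysis yields more structural information about the walls, but none of it is needed for the dimension count.

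The one step you should state explicitly is that connected components of an open subset of $G$ are open in $G$. This holds because $G$, being a manifold, is locally connected.
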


\begin{proof}
    (1) We shall argue by contradiction. Suppose that there exists $\rho_0\in \Sigma(1)\setminus J$ such that $u_{\rho_0}\in N_J$. Then
    \[
    u_{\rho_0}=\sum_{\rho\in J} z_{\rho} u_{\rho},
    \]
    for some $z_{\rho}\in \mathbb{Z}$. Hence for any $m\in M_{\mathbb{R}}$, we have
    \[
    \langle m, u_{\rho_0}\rangle =\sum_{\rho\in J} z_{\rho} \langle m, u_{\rho}\rangle ,
    \]
    and therefore
    \[
    T_M(\rho_0)\supseteq \bigcap_{\rho\in J} T_M(\rho).
    \]
    This implies that $R_J=\emptyset$, contradicting $\sigma\neq \emptyset$.

    (2)
    We first claim there is a canonical isomorphism 
\[
\Phi: T_M=M_{\mathbb R}/M
\cong \operatorname{Hom}_{\mathbb Z}(N,\mathbb R/\mathbb Z)
\]
as topological groups.
Indeed, the quotient map $\mathbb R\to \mathbb R/\mathbb Z$ induces a continuous homomorphism
\[
\operatorname{Hom}_{\mathbb Z}(N,\mathbb R)
\longrightarrow
\operatorname{Hom}_{\mathbb Z}(N,\mathbb R/\mathbb Z).
\]
Since $N$ is a free abelian group of finite rank, this map is surjective, and its kernel is
\[
\operatorname{Hom}_{\mathbb Z}(N,\mathbb Z)=M.
\]
Hence it induces a continuous bijective homomorphism
\[
M_{\mathbb R}/M
\longrightarrow
\operatorname{Hom}_{\mathbb Z}(N,\mathbb R/\mathbb Z).
\]
Both sides are compact Hausdorff topological groups, so this map is a homeomorphism. 

Under $\Phi$, the subgroup 
\[
G=\{x\in T_M\mid \langle x,u_\rho\rangle\in \mathbb Z
\text{ for all } \rho\in J\}
\]
of $T_M$ is isomorphic to the subgroup 
\[
\left\{
\varphi\in \operatorname{Hom}_{\mathbb Z}(N,\mathbb R/\mathbb Z)
\ \middle|
\varphi(N_J)=0
\right\}
\]
of $\operatorname{Hom}_{\mathbb Z}(N,\mathbb R/\mathbb Z)$ as topological groups.

The quotient map $q: N\rightarrow N/{N_J}$ induces a continuous injective homomorphism
\[q^*: \operatorname{Hom}_{\mathbb Z}(N/{N_J}, \mathbb R/{\mathbb Z})\rightarrow \operatorname{Hom}_{\mathbb Z}(N,\mathbb R/\mathbb Z)\]
with image 
\[
\text{im}(q^*)=\left\{
\varphi\in \operatorname{Hom}_{\mathbb Z}(N,\mathbb R/\mathbb Z)
\ \middle|
\varphi(N_J)=0
\right\}.
\]

It remains to show that $q^*$ is a homeomorphism onto its image. To this end, note that $N/N_J$ is finitely generated, we can write
\[
N/{N_J}\cong \mathbb Z^{\oplus r}\oplus T\]
for some integer $r\ge 0$  and finite abelian group $T$. Hence
\[
\operatorname{Hom}_{\mathbb Z}(N/{N_J}, \mathbb R/{\mathbb Z})
\cong
\prod^r (\mathbb R/\mathbb Z)\times \widehat T,\]
where $\widehat T$ is finite. This implies that
$\operatorname{Hom}_{\mathbb Z}(N/N_J,\mathbb R/\mathbb Z)
$
is compact. On the other  hand, the target
$\operatorname{Hom}_{\mathbb Z}(N,\mathbb R/\mathbb Z)\cong \prod^n \mathbb (R/\mathbb Z)$ is Hausdorff, therefore $q^*$ is a homeomorphism onto its image.

Combining this with the homeomorphism $\Phi$, we obtain a canonical isomorphism from $G$ to $\operatorname{Hom}_{\mathbb{Z}}(N/N_J,\mathbb{R}/\mathbb{Z})$, as asserted.

(3) In view of (2), we have
\[\dim(G)=\operatorname{rank}(N/N_J)=\operatorname{rank}(N)-\operatorname{rank}(N_J)=r.
    \]
Take any $\rho'\notin J$ and let $\bar{u}_{\rho'}$ denote the image of $u_{\rho'}$ in $N/N_J$. %Then $N_{J'}/N_J$ is the subgroup of $N/N_J$ generated by $\bar{u}_{\rho'}$. 
    We claim that every connected component of $G\setminus T_M(\rho')$ has dimension $r$. Indeed, there are two cases.
    \begin{itemize}
        \item If $\bar{u}_{\rho'}$ has finite order in $N/N_J$, then $\bar{u}_{\rho'}$ lies entirely in the torsion part of $N/N_J\cong \mathbb{Z}^r\oplus T$, the condition $\chi_{\rho'}(x)=0$ for $x\in T_M$ imposes no condition on the $(\mathbb{R}/\mathbb{Z})^r$-factor and cuts out a proper subgroup of $\widehat{T}$, where $\chi_{\rho'}$ is given by \eqref{eq: chi_rho}. Hence $G\cap T_M(\rho')$ is a union of connected components of $G$, so $G\setminus T_M(\rho')$ is also a union of connected components of $G$. Therefore every connected component of $G\setminus T_M(\rho')$ has dimension $r$.
        \item If $\bar{u}_{\rho'}$ has infinite order in $N/N_J$, that is, if $\bar{u}_{\rho'}$ has a nonzero component in the $\mathbb{Z}^r$-part of $N/N_J\cong \mathbb{Z}^r\oplus T$, then $G\cap T_M(\rho')$ on each connected component of $G$ is either empty or $(r-1)$-dimensional subtorus of that component. It follows that every connected component of $G\setminus T_M(\rho')$ again has dimension $r$.
    \end{itemize}

    Note that
    \[
    R_J=G\setminus \bigcup_{\rho\in\Sigma(1)\setminus J} T_M(\rho).
    \]
    Starting from $G$ and removing all $T_M(\rho)$ for $\rho\in \Sigma(1)\setminus J$ one by one, it follows inductively that every nonempty connected component of $R_J$
    has dimension $r$. Since $\sigma$ is a connected component of $R_J$, we conclude that
    \[
    \dim \sigma=\dim G=\operatorname{rank}(N)-\operatorname{rank}(N_J).
    \]
\end{proof}

\begin{proof}[Proof of Proposition \ref{prop: equivalent condition for trivial 0 stratum}]
	$(1)\Longrightarrow(2)$:
Assume that for any $n$ linearly independent one-dimensional cones in $\Sigma$, their primitive generators form a basis of $N$. Let $\sigma$ be a $0$-dimensional stratum. Since every stratum in $S_X$ is given by a connected component of
\[
R_J=\bigcap_{\rho\in J} T_M(\rho)\setminus \bigcup_{\rho\in\Sigma(1)\setminus J} T_M(\rho),
\]
where $J\subseteq \Sigma(1)$, it follows that $\sigma$ is a connected component of $R_J$ for some $J$.

Since $\dim(\sigma)=0$, Lemma~\ref{prop: dimension of strata} implies that
\[
\operatorname{rank}(N_J)=\operatorname{rank}(N)=n.
\]
Therefore, we can choose $n$ one-dimensional cones $\rho_1,\dots,\rho_n$ in $J$ such that $u_{\rho_1},\dots,u_{\rho_n}$ are linearly independent in $N_{\mathbb{R}}$. By assumption, these vectors form a basis of $N$. Hence
\[
\bigcap_{i=1}^n T_M(\rho_i)=\{0\}.
\]
On the other hand,
\[
\sigma\subseteq R_J\subseteq \bigcap_{i=1}^n T_M(\rho_i).
\]
Therefore, $\sigma=R_J=\{0\}$, that is, $S_X$ has only one 0-dimensional stratum, which equals $\{0\}$

$(2)\Longrightarrow(1)$:
We argue by contradiction. Suppose that there exist $n$ linearly independent one-dimensional cones in $\Sigma$ such that their primitive generators $u_{\rho_1},\dots,u_{\rho_n}$ do not form a basis of $N$. Let
\[
L=\sum^n_{i=1} \mathbb{Z}u_{\rho_i}.
\]
Then $[N:L]>1$. Moreover,
\[
\bigcap_{i=1}^n T_M(\rho_i)
=
\{x\in T_M\mid \langle x,u_{\rho_i}\rangle\in\mathbb{Z},\ i=1,\dots,n\}
\cong \operatorname{Hom}(N/L,\mathbb{R}/\mathbb{Z}).
\]
Since $N/L$ is finite and $[N:L]>1$, the set $\bigcap_{i=1}^n T_M(\rho_i)$ is $0$-dimensional and contains a nontrivial point. Choose a nontrivial point
\[
x\in \bigcap_{i=1}^n T_M(\rho_i),
\]
and let $J=\{\rho_1,\dots,\rho_n\}$. Now take any $\rho\in\Sigma(1)\setminus \{\rho_1,\dots,\rho_n\}$. If $x\in T_M(\rho)$, add $\rho$ to $J$; otherwise leave $J$ unchanged. Repeating this process for all $\rho\in \Sigma(1)\setminus \{\rho_1,\dots,\rho_n\}$, we obtain a new index set, still denoted by $J$, such that $x\in R_J$. Thus we obtain a nontrivial $0$-dimensional stratum, contradicting the assumption. This proves the claim.
\end{proof}

\begin{proposition}
    \label{prop: product with cond 1}
    If smooth toric varieties $X_1,\cdots,X_m$ satisfy the uniform unimodularity condition, then their product $X=\prod_{i=1}^m X_i$ also satisfies this condition.
\end{proposition}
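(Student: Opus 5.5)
By induction on $m$ (or directly), it suffices to handle the case $m=2$. The plan is to set up the product fan, reduce to a block-diagonal matrix, and then count ranks.

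\textbf{Setup.} The fan of $X=X_1\times X_2$ lives in $N=N_1\oplus N_2$. Its cones are the products $\sigma_1\times\sigma_2$. Its rays are therefore of the form $\rho\times\{0\}$ with $\rho\in\Sigma_1(1)$, or $\{0\}\times\rho$ with $\rho\in\Sigma_2(1)$. The corresponding primitive generators are $(u_\rho,0)$ and $(0,u_\rho)$; these are primitive in $N$ because $u_\rho$ is primitive in $N_i$.

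\textbf{Reduction to blocks.} Take $n=n_1+n_2$ rays of $\Sigma$ whose generators are linearly independent. Split them into $k$ rays from $\Sigma_1$ and $l$ rays from $\Sigma_2$, with $k+l=n$. The first group spans a subspace of $(N_1)_{\mathbb{R}}\oplus 0$, so linear independence forces $k\le n_1$; similarly $l\le n_2$. Since $k+l=n_1+n_2$, we get $k=n_1$ and $l=n_2$. Hence the $k$ vectors $u_{\rho}\in N_1$ are linearly independent in $(N_1)_{\mathbb{R}}$, and likewise the $l$ vectors in $N_2$.

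\textbf{Conclusion.} By the uniform unimodularity condition on $X_1$ and $X_2$, each group is a basis of its lattice, so each block has determinant $\pm1$. Ordering the vectors accordingly, the matrix of all $n$ generators is block diagonal, with determinant equal to the product of the two block determinants, namely $\pm1$. Thus the generators form a basis of $N$.

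There is no serious obstacle. The only point needing care is the dimension count $k=n_1$, $l=n_2$, which follows from linear independence inside the two complementary subspaces.
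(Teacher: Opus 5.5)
Your proof is correct and follows essentially the same route as the paper, which identifies the rays of the product fan as $(0,\dots,u_\rho,\dots,0)$ and then simply asserts that uniform unimodularity "can be checked" from the factors. Your dimension count forcing $k=n_1$, $l=n_2$ and the block-diagonal determinant argument supply exactly the verification the paper leaves to the reader.
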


\begin{proof}
    Since $X=\prod_{i=1}^m X_i$, we have
    \[
    N=\bigoplus_{i=1}^m N_i,\quad
    M=\bigoplus_{i=1}^m M_i,\quad
    \Sigma_X=\Sigma_{X_1}\times\cdots\times\Sigma_{X_m}, \text{ and }
    T_M=\prod_{i=1}^m T_{M_i}.
    \]
    Every one-dimensional cone in $\Sigma_X$ is of the form
    \[
    \widetilde{\rho}
    =
    \{0\}\times\cdots\times \rho \times\cdots\times \{0\},
    \]
    where $\rho\in \Sigma_{X_i}(1)$. Thus the primitive generator of $\widetilde{\rho}$ is
    \[
    u_{\widetilde{\rho}}=(0,\dots,0,u_{\rho},0,\dots,0)\in N.
    \]
    Then one can check that the primitive generators in the fan $\Sigma_X$ satisfy uniform unimodularity condition provided that $\Sigma_{X_i}$ satisfies this condition for each $i$.
\end{proof}

\subsection{Thomsen stratification intersection-number condition}
We define two invariants for smooth toric varieties:
\[
B(X):=\max_{\substack{\text{$C$ is a $T$-invariant curve on $X$}\\ \text{$\sigma$ is a $0$-dimensional stratum in $S_X$}}} d_{\sigma}\cdot C,
\]
\[
A(X):=\max_{\substack{\text{$C$ is a $T$-invariant curve on $X$}\\ \rho\in\Sigma(1)}} D_{\rho}\cdot C.
\]
Note that $A(X)\ge 1$ for every smooth toric variety.
\begin{condition}[Thomsen stratification intersection-number condition]
    \label{cond: intersection number of Thomsen collection}
    Let $S_X$ be the stratification of $T_M$ induced by the smooth toric variety $X$. Then for any $k$-dimensional stratum $\sigma\in S_X$ and any $T$-invariant curve $C$ on $X$, we have
    \[
    d_{\sigma}\cdot C\le B(X)+kA(X).
    \]
\end{condition}

When $S_X$ has only one $0$-dimensional stratum, the invariants $B(X)$ and $A(X)$ take simple values.

\begin{lemma}
    \label{le: A(X)=1}
    Let $X$ be a smooth toric variety of dimension $n$. If $S_X$ has only one $0$-dimensional stratum, then $B(X)=0$ and $A(X)=1$.
\end{lemma}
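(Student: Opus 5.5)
The plan is to treat the two invariants separately.

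\emph{$B(X)=0$.} By hypothesis $S_X$ has exactly one $0$-dimensional stratum, namely $\{0\}$. To see this, take $\sigma=\{0\}$ and the representative $\theta=0$. Then $d(0)=\sum_{\rho}\lceil 0\rceil D_\rho=0$, so $d_\sigma=0$ as a divisor, and $d_\sigma\cdot C=0$ for every $T$-invariant curve $C$. The maximum defining $B(X)$ runs only over this one stratum, so $B(X)=0$. The one thing to confirm is that $\{0\}$ really is a stratum, i.e.\ a connected component of $R_J$ for $J=\{\rho\mid 0\in T_M(\rho)\}=\Sigma(1)$. This is exactly what the hypothesis asserts (compare Proposition \ref{prop: equivalent condition for trivial 0 stratum}).

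\emph{$A(X)=1$.} We already know $A(X)\ge 1$, so it suffices to show $D_\rho\cdot C\le 1$ for every ray $\rho$ and every $T$-invariant curve $C$. By Proposition \ref{prop: equivalent condition for trivial 0 stratum}, the hypothesis is equivalent to the uniform unimodularity condition, and I will use it in that form.

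Let $C=V(\tau)$ with $\tau\in\Sigma(n-1)$ a wall separating two maximal cones
\[
\sigma=\tau+\mathbb{R}_{\ge0}u_{\rho_n},\qquad \sigma'=\tau+\mathbb{R}_{\ge0}u_{\rho_{n+1}},
\]
where $\tau$ is generated by $u_{\rho_1},\dots,u_{\rho_{n-1}}$. The standard wall relation reads
\[
u_{\rho_n}+u_{\rho_{n+1}}+\sum_{i=1}^{n-1}a_i u_{\rho_i}=0 ,\qquad a_i=D_{\rho_i}\cdot C.
\]
Moreover $D_{\rho_n}\cdot C=D_{\rho_{n+1}}\cdot C=1$, and $D_\rho\cdot C=0$ for every other ray $\rho$. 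So it remains to show $a_i\le 1$ for each $i$.

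Fix $j\le n-1$ with $a_j\neq 0$. The vectors
\[
u_{\rho_{n+1}},\ u_{\rho_1},\dots,\widehat{u_{\rho_j}},\dots,u_{\rho_{n-1}},\ u_{\rho_n}
\]
are linearly independent. Indeed, substituting the relation for $u_{\rho_{n+1}}$, their determinant equals $\pm a_j\det(u_{\rho_1},\dots,u_{\rho_n})=\pm a_j$, since $\sigma$ is a smooth cone. Uniform unimodularity applied to these $n$ rays then forces $a_j=\pm1$. Hence every $a_i$ lies in $\{-1,0,1\}$, so $D_\rho\cdot C\le 1$ for all $\rho$ and $C$, and $A(X)=1$.

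The main step is recognizing that uniform unimodularity should be applied to this ``mixed'' set of $n$ rays, which is not a cone of $\Sigma$. After that, only the determinant computation remains, which is routine.
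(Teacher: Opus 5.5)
Your proof is correct and follows the paper's argument. $B(X)=0$ because the unique $0$-dimensional stratum is $\{0\}$ with $d_{\{0\}}=0$, and $A(X)=1$ comes from reading the intersection numbers off the wall relation and applying uniform unimodularity (via Proposition~\ref{prop: equivalent condition for trivial 0 stratum}) to the non-cone set $u_{\rho_{n+1}},u_{\rho_1},\dots,\widehat{u_{\rho_j}},\dots,u_{\rho_n}$; your determinant computation giving $\pm a_j$ simply makes explicit a step the paper asserts without justification.
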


\begin{proof}
    It is clear that $B(X)=0$. Let $C$ be a $T$-invariant curve given by an $(n-1)$-dimensional cone $\tau$, that is, $C=V(\tau)$. Assume that $\tau$ is a common face of two $n$-dimensional cones $\sigma$ and $\sigma'$. Since $X$ is smooth, we may assume that the primitive generators of $\tau$ are $u_1,\dots,u_{n-1}$, while the primitive generators of $\sigma$ and $\sigma'$ are obtained by adding $u_n$ and $u_n'$ into $u_1,\dots,u_{n-1}$ respectively. Moreover, both sets
    \[
    u_n,u_1,\dots,u_{n-1}
    \quad\text{and}\quad
    u_n',u_1,\dots,u_{n-1}
    \]
    form bases of $N$. By \cite[\S 5.1]{FultonIntroToToric}, there exist unique integers $a_1,\dots,a_{n-1}$ such that
    \[
    u_n+u_n'+\sum_{i=1}^{n-1} a_i u_i=0.
    \]
    Moreover, we have
    \begin{equation}
        \label{eq:intersection number}
        D_{\rho}\cdot C=
        \begin{cases}
            1 & \text{if $\rho$ is generated by $u_n$ or $u_n'$,}\\
            a_i & \text{if $\rho$ is generated by $u_i, 1\le i\le n-1$,}\\
            0 & \text{for all other $\rho\in\Sigma(1)$.}
        \end{cases}
    \end{equation}

    By assumption, $S_X$ has only one $0$-dimensional stratum. It follows from Proposition~\ref{prop: equivalent condition for trivial 0 stratum} that for every $1\le i\le n-1$, $u_n',u_1,\cdots, \hat{u_i},\cdots,u_n$ are either linearly dependent or form a basis of $N$. Hence, for every $1\le i\le n-1$, $a_i\in\{0,1,-1\}$. Then by \eqref{eq:intersection number}, for any $T$-invariant curve $C$ and any $\rho\in\Sigma(1)$, we have
    \[
    D_{\rho}\cdot C\le 1.
    \]
    On the other hand, there exist $\rho$ and $C$ such that $D_{\rho}\cdot C=1$. Therefore $A(X)=1$.
\end{proof}

Thus, if $X$ satisfies the uniform unimodularity condition, we can write the Thomsen stratification intersection-number condition as follows: for any $k$-dimensional stratum $\sigma\in S_X$ and any $T$-invariant curve $C$ on $X$, one has
\begin{equation}
    \label{eq: simplified intersection number condition}
    d_{\sigma}\cdot C \le k.
\end{equation}

\begin{proposition}\label{prop:product with cond 2}
    Let $X=\prod_{i=1}^m X_i$, where each $X_i$ is a smooth toric variety. Assume that each $X_i$ satisfies the Thomsen stratification intersection-number condition, that is, for every $k_i$-dimensional stratum $\sigma_i\in S_{X_i}$ and every $T_i$-invariant curve $C_i\subset X_i$, one has
    \[
    d_{\sigma_i}\cdot C_i\le B(X_i)+k_iA(X_i).
    \]
    Then for every $k$-dimensional stratum $\sigma\in S_X$ and every $T$-invariant curve $C\subset X$, we have
    \[
    d_\sigma\cdot C\le B(X)+kA(X).
    \]
\end{proposition}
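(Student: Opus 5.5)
The strategy is to reduce everything to the factors, using the product decompositions already recorded in the proof of Proposition~\ref{prop: product with cond 1}: $N=\bigoplus N_i$, $M=\bigoplus M_i$, $T_M=\prod T_{M_i}$, and every ray of $\Sigma_X$ comes from a ray of exactly one factor. Consequently the toric hyperplanes of $T_M$ are exactly $T_{M_1}\times\cdots\times T_{M_i}(\rho)\times\cdots\times T_{M_m}$. From this I would first show that the strata of $S_X$ are precisely the products $\sigma=\sigma_1\times\cdots\times\sigma_m$ with $\sigma_i\in S_{X_i}$, and that $\dim\sigma=\sum k_i$ with $k_i=\dim\sigma_i$. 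To see this, note that $R_J$ for $J=\bigsqcup J_i$ is the product of the sets $R_{J_i}$, so its connected components are products of components. Second, for $\theta=(\theta_1,\dots,\theta_m)\in\sigma$ the definition of $d(\theta)$ splits ray by ray, giving $d_\sigma=\sum_i \pi_i^*d_{\sigma_i}$, where $\pi_i:X\to X_i$ is the projection. Similarly, $D_{\widetilde\rho}=\pi_i^*D_\rho$ for a ray $\rho$ of $X_i$.

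Next I would describe the $T$-invariant curves. An $(n-1)$-dimensional cone of $\Sigma_X$ is a product of cones that are maximal in every factor except one index $j$, where the cone has codimension one. Hence the curve is $C=\{x_1\}\times\cdots\times C_j\times\cdots\times\{x_m\}$, where the $x_i$ are torus-fixed points and $C_j\subset X_j$ is $T_j$-invariant. By the projection formula, $\pi_i^*D\cdot C=0$ for $i\ne j$ and $\pi_j^*D\cdot C=D\cdot C_j$. Therefore $d_\sigma\cdot C=d_{\sigma_j}\cdot C_j$ and $D_{\widetilde\rho}\cdot C$ equals $D_\rho\cdot C_j$ or $0$. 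Combining this over all curves and strata gives $A(X)=\max_i A(X_i)$. For $B(X)$, the $0$-dimensional strata of $X$ are exactly products of $0$-dimensional strata, so $B(X)=\max_i B(X_i)$. Here I would check that every $X_i$ has at least one $0$-dimensional stratum, namely the stratum containing $0$, for which $J=\Sigma(1)$ and $N_J$ has full rank whenever $X_i$ is complete. If the $X_i$ are not complete, I would note that the hypothesis implicitly requires this, or handle it by noting that $0$ still lies in a stratum of dimension $\operatorname{rank}(N)-\operatorname{rank}(N_{\Sigma(1)})$.

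Finally, the estimate follows directly: $d_\sigma\cdot C=d_{\sigma_j}\cdot C_j\le B(X_j)+k_jA(X_j)\le B(X)+kA(X)$, because $k_j\le k$ and $A(X_j)\ge1>0$. The main obstacle is the careful identification of the strata of $S_X$ as products of strata together with the dimension bookkeeping. In particular, it must be verified that $B(X)$ is computed over $0$-dimensional strata, which are products of $0$-dimensional factor strata, so that $B(X_j)\le B(X)$ holds. This step is where I would spend the most care, using Lemma~\ref{prop: dimension of strata} on each factor.
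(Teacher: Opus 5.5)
Your proposal is correct and follows the paper's proof essentially step by step: $d_\sigma=\sum_i\pi_i^*d_{\sigma_i}$, invariant curves of the form $\{x_1\}\times\cdots\times C_j\times\cdots\times\{x_m\}$, $A(X)=\max_i A(X_i)$ and $B(X)=\max_i B(X_i)$, then the chain $d_\sigma\cdot C=d_{\sigma_j}\cdot C_j\le B(X_j)+k_jA(X_j)\le B(X)+kA(X)$. Two of your steps fill in details the paper only asserts: the identity $R_J=\prod_i R_{J_i}$, which shows the strata are products, and the observation that $B(X_j)\le B(X)$ needs a $0$-dimensional stratum and a torus-fixed point in every other factor, both of which are automatic in the complete/projective setting of the paper.
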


\begin{proof}
For $X$ and its factors $X_i$, we have
\[
N=\bigoplus_{i=1}^m N_i,
\quad
M=\bigoplus_{i=1}^m M_i,
\quad
\Sigma=\Sigma_1\times \cdots \times \Sigma_m,\text{ and }
T_M=\prod_{i=1}^m T_{M_i}.
\]
Moreover, every stratum in $S_X$ is of the form
\[
\sigma=\sigma_1\times \cdots \times \sigma_m,
\quad
\sigma_i\in S_{X_i},
\]
and satisfies
\[
\dim(\sigma)=\sum_{i=1}^m \dim(\sigma_i).
\]

\textbf{Step 1: Decompose $d_\sigma$ as a sum of pullbacks of the $d_{\sigma_i}$.}
Let $\pi_i:X\to X_i$ be the projection map. Every one-dimensional cone $\widetilde{\rho}\in \Sigma(1)$ comes from some $\rho\in \Sigma_i(1)$, and the corresponding divisor satisfies
\begin{equation}
\label{eq: D_tilde_rho}
    D_{\widetilde{\rho}}=\pi_i^*D_\rho.
\end{equation}
Write $\sigma=\sigma_1\times \cdots \times \sigma_m$, and choose $\theta=(\theta_1,\dots,\theta_m)\in \sigma$. Then
\begin{equation}
    \label{eq: decomposition of d_sigma}
    \begin{aligned}
        d_\sigma
        &=
        \sum_{\widetilde{\rho}\in \Sigma(1)}
        \left\lceil \langle \theta,u_{\widetilde{\rho}}\rangle \right\rceil
        D_{\widetilde{\rho}} \\
        &=
        \sum_{i=1}^m
        \sum_{\rho\in \Sigma_i(1)}
        \left\lceil \langle \theta_i,u_\rho\rangle \right\rceil
        \pi_i^*D_\rho \\
        &=
        \sum_{i=1}^m \pi_i^*(d_{\sigma_i}).
    \end{aligned}
\end{equation}

\textbf{Step 2: Decomposition of $T$-invariant curves on $X$.}
Every cone in $\Sigma$ is of the form
\[
\tau=\tau_1\times \cdots \times \tau_m
\quad
(\tau_i\in \Sigma_i).
\]
Since every $T$-invariant curve on $X$ corresponds to a cone $\tau$ of codimension $1$, there exists a unique index $j$ such that $\tau$ has the form
\[
\tau=\tau_1\times \cdots \times \tau_m,
\]
where
\[
\operatorname{codim}(\tau_j)=1,
\quad
\operatorname{codim}(\tau_i)=0 \ \text{for all } i\neq j.
\]
Equivalently,
\begin{equation}
    \label{eq: invariant curve on product}
    C=
    \{x_1\}\times \cdots \times \{x_{j-1}\}\times C_j\times
    \{x_{j+1}\}\times \cdots \times \{x_m\},
\end{equation}
where $x_i$ is the torus-fixed point determined by $\tau_i$ for $i\neq j$, and $C_j\subset X_j$ is the invariant curve determined by $\tau_j$.

For such a curve, if $D_i$ is a divisor on $X_i$, then
\begin{equation}
    \label{eq: intersection}
    \pi_i^*D_i\cdot C=
    \begin{cases}
    D_j\cdot C_j,& i=j,\\
    0,& i\neq j.
    \end{cases}
\end{equation}

\textbf{Step 3: Compute $A(X)$ and $B(X)$.}
We first claim that
\begin{equation}
    \label{eq: max A_i}
    A(X)=\max_{1\le i\le m} A(X_i).
\end{equation}

Indeed, by \eqref{eq: D_tilde_rho} and \eqref{eq: intersection}, it follows immediately that
\[
A(X)\le \max_i A(X_i).
\]
Conversely, choose $\rho_i\in \Sigma_i(1)$ and a curve $C_i\subset X_i$ such that
\[
D_{\rho_i}\cdot C_i=A(X_i).
\]
Then choose a torus-fixed point on each $X_j$ for $j\neq i$, and take the product with $C_i$ to obtain a curve $C$ on $X$. We then have
\[
\pi_i^*D_{\rho_i}\cdot C=D_{\rho_i}\cdot C_i=A(X_i),
\]
and hence
\[
A(X)\ge A(X_i).
\]
Since $i$ is arbitrary, we obtain
\[
A(X)\ge \max_i A(X_i).
\]

Similarly, using the expression \eqref{eq: decomposition of d_sigma} for $d_\sigma$, the description \eqref{eq: invariant curve on product} of $T$-invariant curves, and the intersection formula \eqref{eq: intersection}, one proves in the same way that
\begin{equation}
    \label{eq:max B_i}
    B(X)=\max_{1\le i\le m} B(X_i).
\end{equation}

\textbf{Step 4: Proof of $d_\sigma\cdot C\le B(X)+kA(X)$.}
Take a $k$-dimensional stratum $\sigma\in S_X$ and a $T$-invariant curve $C\subset X$. By Steps 1 and 2, we may write
\[
\sigma=\sigma_1\times \cdots \times \sigma_m,
\quad
C=\{x_1\}\times \cdots \times C_j\times \cdots \times \{x_m\}.
\]
By \eqref{eq: decomposition of d_sigma} and \eqref{eq: intersection}, we obtain
\begin{equation}
    d_\sigma\cdot C=d_{\sigma_j}\cdot C_j.
\end{equation}
Let
\[
k_i=\dim(\sigma_i),
\quad
k=\sum_{i=1}^m k_i.
\]
By the assumption of the proposition,
\[
d_\sigma\cdot C
=
d_{\sigma_j}\cdot C_j
\le
B(X_j)+k_jA(X_j).
\]
Combining this with \eqref{eq: max A_i} and \eqref{eq:max B_i}, we obtain
\[
d_\sigma\cdot C
\le
B(X)+kA(X).
\]
This completes the proof.
\end{proof}

\subsection{Examples}
In this subsection, we verify the two conditions introduced above for three basic families of smooth toric varieties:
\begin{itemize}
    \item For $0 \le k \le n+1$, let $p_0, p_1, \dots, p_{k-1}$ be $k$ points in general position in $\mathbb{P}^n$, and define
    \[
    A_{n,k} := \operatorname{Bl}_{\{p_0,p_1,\dots,p_{k-1}\}}(\mathbb{P}^n).
    \]

    \item For $n-k \ge 2$, let $Z \subseteq \mathbb{P}^n$ be a $k$-dimensional linear subspace, and define
    \[
    B_{n,k} := \operatorname{Bl}_Z(\mathbb{P}^n).
    \]

    \item For $s,r_1,r_2 \ge 1$, define
    \[
    C_{s,r_1,r_2} := \mathbb{P}\bigl(\mathcal{O}_{\mathbb{P}^s}^{\oplus r_1}\oplus \mathcal{O}_{\mathbb{P}^s}(1)^{\oplus r_2}\bigr).
    \]
\end{itemize}

\begin{proposition}\label{prop: example}
    
    The varieties $A_{n,k}$, $B_{n,k}$, and $C_{s,r_1,r_2}$ satisfy the uniform unimodularity condition and the Thomsen stratification intersection-number condition.
\end{proposition}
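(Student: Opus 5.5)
The plan is to work explicitly with the fans. For each family I will list the ray generators, check uniform unimodularity directly, and then verify the simplified Thomsen inequality \eqref{eq: simplified intersection number condition}. That inequality says $d_\sigma\cdot C\le k$ for every $k$-dimensional stratum, and it is available because Lemma \ref{le: A(X)=1} gives $B(X)=0$ and $A(X)=1$ once uniform unimodularity holds.

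The fans are as follows.
\begin{itemize}
\item For $A_{n,k}$, take $N=\ZZ^n$ with rays $e_1,\dots,e_n$, $e_0=-\sum e_i$, together with the exceptional rays $-e_{j}$, which are the sums of the $n$ generators other than $e_j$ (all indices $j$ in a chosen subset of size $k$ of $\{0,\dots,n\}$).
\item For $B_{n,k}$, the rays are $e_0,\dots,e_n$ plus the single extra ray $e_{k+1}+\dots+e_n$, after relabeling.
\item For $C_{s,r_1,r_2}$, use the standard projective bundle fan. Its rays are $e_1,\dots,e_s$, $-\sum e_i+\sum_{j\in I_2} f_j$ in $\ZZ^s\oplus\ZZ^{r-1}$, and the fiber rays $f_1,\dots,f_{r-1}$, $-\sum f_j$.
\end{itemize}
In each case every ray generator has coordinates in $\{0,\pm1\}$ and is a signed sum of a fixed set of basis-like vectors. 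For uniform unimodularity I expect to verify that the matrix of ray generators is totally unimodular, i.e.\ all its maximal minors lie in $\{0,\pm1\}$. For $A_{n,k}$ and $B_{n,k}$ the vectors are $\pm$ (sums of subsets of $e_0,\dots,e_n$ modulo $\sum e_i=0$). After choosing coordinates, they correspond to $\pm$ indicator vectors of intervals or of complements, and I would invoke the interval/network-matrix criterion for total unimodularity. The $C$ case I would reduce to a block structure built from the two $\PP$-type fans. An alternative route is the criterion of Proposition \ref{prop: equivalent condition for trivial 0 stratum}: show directly that any point $\theta\in T_M$ lying on $n$ independent hyperplanes $T_M(\rho)$ is $0$.

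For the intersection condition I would first compute $D_\rho\cdot C$ for all invariant curves, using the wall relation $u_n+u_n'+\sum a_iu_i=0$ from the proof of Lemma \ref{le: A(X)=1}; here each $a_i\in\{0,\pm1\}$. Then $d_\sigma\cdot C=\sum_\rho\lceil\langle\theta,u_\rho\rangle\rceil D_\rho\cdot C$. Along a curve $C=V(\tau)$ with wall relation as above, this equals
\[
\lceil\langle\theta,u_n\rangle\rceil+\lceil\langle\theta,u_n'\rangle\rceil+\sum a_i\lceil\langle\theta,u_i\rangle\rceil .
\]
Since $\langle\theta,\cdot\rangle$ is linear, the ceilings of the terms have sum differing from $\lceil 0\rceil=0$ by at most the number of non-integral terms minus one. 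More precisely, for reals $x_j$ with $\sum c_jx_j=0$ and $c_j=\pm1$, one has $\sum c_j\lceil x_j\rceil\le \#\{j:c_j=+1,\ x_j\notin\ZZ\}-1$ when some $x_j$ is non-integral, and $0$ otherwise. The main point is therefore to bound the number of non-integral values $\langle\theta,u_\rho\rangle$ among the rays appearing in the relation by $k+1$ when $\theta$ lies in a $k$-dimensional stratum.

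Lemma \ref{prop: dimension of strata} shows the rays with integral pairing span a sublattice of rank $n-k$, so at most $k$ of the $n$ independent vectors $u_n,u_1,\dots,u_{n-1}$ have non-integral pairing. Together with $u_n'$, this gives at most $k+1$ non-integral terms. Hence $d_\sigma\cdot C\le k$ whenever the relation has at most one negative coefficient pattern, and that case is handled by the inequality above. The main obstacle is curves whose wall relation has some $a_i=-1$ (these occur in $B_{n,k}$ and $C_{s,r_1,r_2}$), since then negative terms enter the bound. For these I would check the inequality family by family. The approach would be to rewrite the relation as one among only positively weighted terms by moving the $a_i=-1$ terms to the other side, and then use that a ceiling of a sum is at most the sum of ceilings; if needed, I would fall back on explicit case analysis of the few wall types in each fan.
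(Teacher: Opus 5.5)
Your uniform unimodularity argument is fine, and more explicit than the paper's bare assertion. Up to negating columns, the generator matrices are interval matrices, hence totally unimodular. For $C_{s,r_1,r_2}$ you also need to negate the $e$-rows and order the coordinates as $e,g,f$.

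The gap is in the intersection-number part. Let $P$ be the number of non-integral terms with $c_j=+1$. Your ceiling inequality $\sum c_j\lceil x_j\rceil\le P-1$ is correct for every sign pattern, so walls with $a_i=-1$ are not actually the hard case. The step $P\le k+1$ is what fails, and your justification for it inverts an inequality. Lemma~\ref{prop: dimension of strata} gives $\operatorname{rank}N_J=n-k$. So at most $n-k$ of the independent vectors $u_n,u_1,\dots,u_{n-1}$ can pair integrally, which means at \emph{least} $k$ of them are non-integral. Nothing bounds them from above, because the rays spanning $N_J$ need not occur in the wall relation at all. Already in $\PP^2$, take $\langle\theta,e_1\rangle=0$ and $\langle\theta,e_2\rangle=\tfrac12$, a $1$-dimensional stratum. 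Both generators of the cone spanned by $e_2$ and $-e_1-e_2$ then pair non-integrally.

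Worse, $P\le k+1$ is false in the families at hand. It fails even for walls with all coefficients $+1$, the case you regarded as settled. Take $B_{3,1}$ with rays $e_1,f_1,f_2$, $v_0=-(e_1+f_1+f_2)$, $w=f_1+f_2$, and let $\theta$ pair to $0.3,0.5,0.5$ with $e_1,f_1,f_2$. Then $\langle\theta,w\rangle=1$ and $\langle\theta,v_0\rangle=-1.3$, so $J=\{w\}$ and $\dim\sigma=2$. The wall $\operatorname{cone}(e_1,v_0)$ has relation $e_1+f_1+f_2+v_0=0$ with four non-integral terms. Your estimate therefore only gives $d_\sigma\cdot C\le 3$, whereas in fact $d_\sigma\cdot C=1+1+1-1=2$.

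Equivalently, write $d_\sigma\cdot C=\sum_\rho\{-\langle\theta,u_\rho\rangle\}\,D_\rho\cdot C$. Your argument bounds each fractional part by $1$ separately. That discards the coupling that integrality on rays outside the wall relation imposes; here $x_1+x_2=\langle\theta,w\rangle\in\ZZ$ forces $x_1+x_2\ge 1$.

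That coupling is exactly what the paper exploits. It writes $d_\sigma$ in a basis of $\mathrm{Pic}$ ($H,E_a$; $H,E$; $H,\xi$) and lists the few numerical classes of invariant curves. It then runs a case analysis on which walls contain the stratum ($S_I\in\ZZ$; $u,v\in\ZZ$; $F,G\in\ZZ$). The key fact is that an integral sum of positive fractional coordinates is $\ge 1$, and is $\ge 2$ when two such walls are nested. So your fallback case analysis is needed for every wall type, not only those with $a_i=-1$. Incidentally, such walls also occur in $A_{n,k}$, for lines in the exceptional divisors.
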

\begin{proof}
 By applying a $\operatorname{PGL}(n+1)$-transformation to $\mathbb{P}^n$, we may assume that
\[
p_0=[1:0:\cdots:0],\ \cdots,\ p_{k-1}=[0:\cdots:0:1:0:\cdots:0]
\]
for $A_{n,k}$. We may also assume that
\[
Z=\{[x_0:\cdots:x_n]\in\mathbb{P}^n\mid x_{k+1}=\cdots=x_n=0\}
\]
for $B_{n,k}$.

For $A_{n,k}$, the fan $\Sigma_{A_{n,k}}$ is in $N_{\RR}\cong \RR^n$. Let $e_1,\cdots,e_n$ be a basis of $N$. The one-dimensional cones of $\Sigma_{A_{n,k}}$ are generated by
\[
-u:=\sum_{i=1}^n e_i, e_1,\cdots, e_n, u, -e_1,\cdots, -e_{k-1}.\]
If $k=0$, then the one-dimensional cones of $\Sigma_{\mathbb{P}^n}$ are generated by $-u,e_1,\cdots,e_n$.

For $B_{n,k}$, let $c:=n-k$, and let $N$ have basis $e_1,\dots,e_k,f_1,\dots,f_c$. The one-dimensional cones of $\Sigma_{B_{n,k}}$ are generated by
\[
e_1,\dots,e_k,\quad f_1,\dots,f_c,\quad
v_0:=-(e_1+\cdots+e_k+f_1+\cdots+f_c),\quad
w:=f_1+\cdots+f_c.
\]

For $C_{s,r_1,r_2}$, set $t=r_1-1$. The fan $\Sigma_{C_{s,r_1,r_2}}$ is in $N_{\RR}\cong \RR^{s+t+r_2}$. Let $e_1,\cdots,e_s, f_1,\cdots,f_t, g_1,\cdots, g_{r_2}$ be a basis of $N$, and
then the one-dimensional cones of $\Sigma_{C_{s,r_1,r_2}}$ are generated by
\[e_1,\cdots, e_s,\ f_1,\cdots, f_t, g_1,\cdots g_{r_2},\]
\[u_0:=-(\sum_{a=1}^t f_a+\sum_{j=1}^{r_2} g_j),\quad v_0:=\sum_{j=1}^{r_2}g_j-\sum_{i=1}^s e_i.\]

By the definition of the uniform unimodularity condition, it suffices to verify that every square submatrix, whose columns are selected from the listed primitive generators in the cases $A_{n,k}$, $B_{n,k}$, and $C_{s,r_1,r_2}$, has determinant equal to $0$, $1$, or $-1$. Therefore, these three classes of toric varieties satisfy the uniform unimodularity condition.

We now verify the Thomsen stratification intersection-number condition for these three families, using the notation for the fans and their primitive generators introduced above. Since they already satisfy the uniform unimodularity condition, Lemma~\ref{le: A(X)=1} shows that it is enough to prove that for any $d$-dimensional stratum $\sigma\in S_X$ and any $T$-invariant curve $C$ on $X$, we have
\begin{equation}
    \label{eq: aim}
    d_{\sigma}\cdot C\le d.
\end{equation}
\begin{enumerate}
    \item Let $X=A_{n,k}$ and let $(x_1,\cdots,x_n)$ be coordinates on $T_M$, where $x_i\in[0,1)$. Then the stratification $S_X$ of $T_M$ is induced by the toric hyperplanes
\[
x_i=0\quad (1\le i\le n),\quad x_1+x_2+\cdots+x_n\in\mathbb{Z}.
\]

Fix a stratum $\sigma$, and set
\[
I=\{i\mid x_i>0\text{ on }\sigma\},
\quad
p=|I|,
\quad
S_I=\sum_{i\in I}x_i.
\]
If $p=0$, then $\mathcal{O}_X(d_\sigma)=\mathcal{O}_X$, and the conclusion \eqref{eq: aim} is immediate. Thus we may assume $p\ge 1$. We divide the strata of $S_X$ into the following two types:
\begin{itemize}
    \item \textit{The open case:} if $\ell<S_I<\ell+1$ for some integer $0\le \ell\le p-1$, then $\dim(\sigma)=p$, and
    \[
    d_\sigma=\sum_{i\in I}D_{e_i}+(\ell+1)D_u-\ell D_{-u}.
    \]

    \item \textit{The wall case: }if $S_I=\ell$ for some integer $1\le \ell\le p-1$, then $\dim(\sigma)=p-1$, and
    \[
    d_\sigma=\sum_{i\in I}D_{e_i}+\ell D_u-\ell D_{-u}.
    \]
\end{itemize}
Set $m:=p-\ell$. In both cases, $1\le m\le \dim(\sigma)$. Therefore, it suffices to show that for every $T$-invariant curve $C$,
\[
d_\sigma\cdot C\le m.
\]

Let $\pi: X\to\mathbb{P}^n$, and set $H=\pi^*\mathcal{O}_{\mathbb{P}^n}(1)$.
Let $E_0,\dots,E_{k-1}$ be the exceptional divisors of the blow-up. The relations between the divisors corresponding to the one-dimensional cones and $H,E_i$ are
\[
D_u=E_0,
\quad
D_{-e_i}=E_i\quad (1\le i\le k-1),
\]
\[
D_{-u}\sim H-\sum_{a=1}^{k-1}E_a,
\quad
D_{e_i}\sim H-E_0-\sum_{\substack{1\le a\le k-1\\ a\ne i}}E_a,
\]
where if $i\ge k$, the expression for $D_{e_i}$ means that the sum is taken over all $a=1,\dots,k-1$.
Substituting these expressions into the formula for $d_\sigma$, we obtain
\[
d_\sigma\sim mH-\sum_{a=0}^{k-1}\lambda_aE_a,
\]
where each $\lambda_a\in\{m-1,m\}$.

We now classify the $T$-invariant curves on $A_{n,k}$:
\begin{enumerate}
    \item If a $T$-invariant curve is contained in some exceptional divisor  $E_a$, denote it by $l_a$, then
    \[
    H\cdot l_a=0,
    \quad
    E_a\cdot l_a=-1,
    \quad
    E_b\cdot l_a=0\quad (b\ne a).
    \]
    Hence
    \[
    d_\sigma\cdot l_a=\lambda_a\le m.
    \]

    \item Let $l_{ab}$ be the curve on $A_{n,k}$ obtained as the strict transform of the line in $\mathbb{P}^n$ joining the fixed points $p_a$ and $p_b$. Let
    \[
    T_{ab}\subseteq \{a,b\}
    \]
    denote the set of indices such that $p_a$ or $p_b$ lies in the center of the blow-up. Then $l_{ab}$ satisfies
    \[
    H\cdot l_{ab}=1,
    \quad
    E_t\cdot l_{ab}=
    \begin{cases}
    1 & t\in T_{ab},\\
    0 & t\notin T_{ab}.
    \end{cases}
    \]
    Therefore,
    \[
    d_\sigma\cdot l_{ab}
    =
    m-\sum_{t\in T_{ab}}\lambda_t
    \le m.
    \]
\end{enumerate}

In conclusion, for every stratum $\sigma$ and every $T$-invariant curve $C$, we have
\[
d_\sigma\cdot C\le m\le \dim(\sigma).
\]
This proves \eqref{eq: aim} for $X=A_{n,k}$.

\item Let $X=B_{n,k}$ and let $(z_1,\dots,z_k,x_1,\dots,x_c)$ be coordinates on $T_M$, represented by variables in $[0,1)$. The strata of $S_X$ are determined by the toric hyperplanes
\[
z_j=0,\ x_i=0,\ 
v:=z_1+\cdots+z_k+x_1+\cdots+x_c\in\mathbb{Z},\ 
u:=x_1+\cdots+x_c\in\mathbb{Z}.
\]

Fix a stratum $\sigma$, and define
\[
I=\{i\mid x_i>0\text{ on }\sigma\},
\quad
J=\{j\mid z_j>0\text{ on }\sigma\}.
\]
Write $p=|I|$, $q=|J|$, and set $\alpha=\lceil u\rceil$, $\beta=\lfloor v\rfloor$. Then
\[
d_\sigma=\sum_{i\in I}D_{f_i}+\sum_{j\in J}D_{e_j}+\alpha D_w-\beta D_{v_0}.
\]
Let $H$ be the pullback of a hyperplane in $\mathbb{P}^n$, and let $E$ be the exceptional divisor of the blow-up. Then
\[
D_{f_i}\sim H-E,
\quad
D_{e_j}\sim H,
\quad
D_{v_0}\sim H,
\quad
D_w=E.
\]
Hence
\begin{equation}
    \label{eq: d_sigma}
    d_\sigma\sim (p+q-\beta)H-(p-\alpha)E.
\end{equation}

The $T$-invariant curves on $B_{n,k}$ are represented by three numerical types $C_0$, $M$ and $F$ such that:
\[
    (H,E)\cdot C_0=(1,0),\quad
    (H,E)\cdot M=(1,1),\quad
    (H,E)\cdot F=(0,-1).
\]

Substituting these intersection numbers into \eqref{eq: d_sigma}, we obtain
\[
d_\sigma\cdot C_0=p+q-\beta,\quad
d_\sigma\cdot M=q+\alpha-\beta,\quad
d_\sigma\cdot F=p-\alpha.
\]

Since $0\le \alpha\le p$ and $\beta\le \alpha+q$, all three intersection numbers are bounded above by $p+q-\beta$. It remains to compare this number with $\dim(\sigma)$. If only one of $p,q$ is nonzero, there is only one possible wall, and the wall case subtracts one from the dimension while forcing $\beta\ge 1$. If both $p$ and $q$ are nonzero, the two walls $u\in\mathbb{Z}$ and $v\in\mathbb{Z}$ are independent; moreover, if both occur, then $u\ge 1$ and $v-u=\sum_{j\in J}z_j$ is a positive integer, so $v\ge 2$. In all cases,
    \[
    p+q-\beta\le \dim(\sigma),
    \]
    and hence $d_\sigma\cdot C\le \dim(\sigma)$.
This proves \eqref{eq: aim} for $X=B_{n,k}$.

\item Let $X=C_{s,r_1,r_2}$ and let $(x_1,\dots,x_s,u_1,\dots,u_t,y_1,\dots,y_{r_2})$ be coordinates on $T_M$, where $x_i,u_a,y_j\in[0,1)$. Then the stratification $S_X$ of $T_M$ is induced by toric hyperplanes
\[
x_i=0\quad (1\le i\le s),\quad
u_a=0\quad (1\le a\le t),\quad
y_j=0\quad (1\le j\le r_2),
\]
together with
\[
F:=\sum_{a=1}^t u_a+\sum_{j=1}^{r_2}y_j\in\mathbb{Z},
\quad
G:=\sum_{j=1}^{r_2}y_j-\sum_{i=1}^s x_i\in\mathbb{Z}.
\]

Fix a stratum $\sigma$. Define
\[
I=\{i\mid x_i>0\text{ on }\sigma\},\ 
A=\{a\mid u_a>0\text{ on }\sigma\},\ 
J=\{j\mid y_j>0\text{ on }\sigma\},
\]
and set
\[
p:=|I|,
\quad
r:=|A|,
\quad
q:=|J|,
\quad
f:=\lfloor F\rfloor,
\quad
g:=\lceil G\rceil.
\]
With this notation, we have
\[
d_\sigma
=
\sum_{i\in I}D_{e_i}
+\sum_{a\in A}D_{f_a}
+\sum_{j\in J}D_{g_j}
+gD_{v_0}
-fD_{u_0}.
\]

Let $\pi:X\to\mathbb{P}^s$ be the projection, set $H=\pi^*\mathcal{O}_{\mathbb{P}^s}(1)$, and let $\xi$ be the tautological class on $X$. We have
\[
D_{e_i}\sim D_{v_0}\sim H,
\quad
D_{f_a}\sim D_{u_0}\sim \xi,
\quad
D_{g_j}\sim \xi-H.
\]
Therefore
\begin{equation}
    \label{eq: d_sigma in type C}
    d_\sigma\sim \alpha H+\beta \xi,
    \quad
    \alpha:=p-q+g,\quad
    \beta:=r+q-f.
\end{equation}

Next, we classify the $T$-invariant curves on $X$. On a toric variety, every $T$-invariant curve corresponds to a wall relation. For $X=C_{s,r_1,r_2}$, the wall relations are precisely the following:
\[
\sum_{a=1}^t f_a+\sum_{j=1}^{r_2} g_j+u_0=0, \quad 
\sum_{i=1}^s e_i+v_0-\sum_{j=1}^{r_2}g_j=0,\quad
\sum_{i=1}^s e_i+v_0+\sum_{a=1}^t f_a+u_0=0.
\]
Let $\ell$, $\ell_0$, and $\ell_1$ denote the corresponding $T$-invariant curves. Then
\[
(H,\xi)\cdot \ell=(0,1), \quad
(H,\xi)\cdot \ell_0=(1,0), \quad
(H,\xi)\cdot \ell_1=(1,1).
\]
Substituting the intersection numbers into \eqref{eq: d_sigma in type C}, we obtain
\[
d_\sigma\cdot \ell=\beta,
\quad
d_\sigma\cdot \ell_0=\alpha,
\quad
d_\sigma\cdot \ell_1=\alpha+\beta.
\]
It remains to compare $\alpha$, $\beta$, and $\alpha+\beta$ with $\dim(\sigma)$.
To this end, let $\delta_F,\delta_G\in\{0,1\}$ be defined by $\delta_F=1$ if $F\in\mathbb Z$ and $\delta_F=0$ otherwise; similarly for $\delta_G$. 
\begin{enumerate}
    \item If at least two of $p,q,r$ are nonzero, then
    \[
    \dim(\sigma)=p+q+r-\delta_F-\delta_G.
    \]
    We have 
    \[G<q\Longrightarrow g-q\le -\delta_G,\]
    and since $q+r-\delta_F\ge 0$, it follows that
    \[
    \alpha=p-q+g\le p+r+q-\delta_F-\delta_G.
    \]
    Next, to prove that $\alpha+\beta\le \dim(\sigma)$, it suffices to show that
    \[
    g-f\le q-\delta_F-\delta_G.
    \]
    Indeed, since $g\le q-\delta_G$ and $f\ge \delta_F$, we obtain $g-f\le q-\delta_F-\delta_G$.

    Finally, to prove $\beta\le \dim(\sigma)$, it suffices to show that
    \[
    -f\le p-\delta_F-\delta_G.
    \]
    Since $-f\le 0$, it remains only to consider the case where
    \[
    p-\delta_F-\delta_G<0.
    \]
    This leaves the following possibilities.
    \begin{enumerate}
        \item\label{itm:b1} If $p=1$, $\delta_F=1$, and $\delta_G=1$, then at least one of $r,q$ is nonzero since our assumption is that at least two among $p,q,r$ are nonzero. Hence $F\in\mathbb{Z}$ and $F\ge 1$, so $-f\le -1=p-\delta_F-\delta_G$.
        \item If $p=0$, $\delta_F=1$, and $\delta_G=0$, then the same argument as in \eqref{itm:b1} yields $-f\le -1=p-\delta_F-\delta_G$.
        \item If $p=0$, $\delta_F=0$, and $\delta_G=1$, then $G=\sum_{j=1}^{r_2} y_j\ge 1$, and hence $F\ge 1$. Therefore $-f\le -1=p-\delta_F-\delta_G$.
        \item If $p=0$, $\delta_F=1$, and $\delta_G=1$, then both $r$ and $q$ are nonzero. Thus
        \[
        G=\sum_{j=1}^{r_2} y_j\in\mathbb{Z}\Longrightarrow G\ge 1,
        \]
        \[
        F=\sum_{a=1}^t u_a+\sum_{j=1}^{r_2} y_j\in\mathbb{Z}\Longrightarrow F\ge 2.
        \]
        Therefore
        \[
        -f\le -2=p-\delta_F-\delta_G.
        \]
    \end{enumerate}
    \item If exactly one of $p,q,r$ is nonzero, we distinguish three cases.
    \begin{enumerate}
        \item Suppose that $p=0,\ r=0$, and $q\ge 1$. In this case $F$ and $G$ contribute the same hyperplane, and hence
        \[
        \dim(\sigma)=q-\delta_F=q-\delta_G.
        \]
        Moreover,
        \[
        \alpha=-q+g,\quad \beta=q-f,\quad F=G,\quad f=\lfloor F\rfloor,\quad g=\lceil G\rceil.
        \]
        Therefore,
        \[
        G<q\Longrightarrow g\le q\Longrightarrow \alpha\le 0\le \dim(\sigma),
        \]
        \[
        f\ge \delta_F\Longrightarrow \beta\le \dim(\sigma),
        \]
        \[
        \alpha+\beta=g-f=1-\delta_F\le q-\delta_F=\dim(\sigma).
        \]
        \item Suppose that $p=0,\ r\ge 1$, and $q=0$.
        In this case
        \[
        G=g=0,\quad \alpha=0,\quad \beta=r-f,\quad \dim(\sigma)=r-\delta_F.
        \]
        Hence
        \[
        f\ge \delta_F\Longrightarrow \beta=\alpha+\beta\le \dim(\sigma).
        \]

        \item Suppose that $p\ge 1,\ r=0$, and $q=0$.
        In this case
        \[
        F=f=0,\quad G=-\sum_{i=1}^s x_i,\quad \alpha=p+g,\quad \beta=0,\quad \dim(\sigma)=p-\delta_G.
        \]
        Hence
        \[
        g=\lceil G\rceil\le -\delta_G\Longrightarrow \alpha=\alpha+\beta\le \dim(\sigma).
        \]
   \end{enumerate}
\end{enumerate}

Combining the above cases, we conclude that for every stratum $\sigma$ and every $T$-invariant curve $C$, we have
\[
d_\sigma\cdot C\le \dim(\sigma).
\]
Hence this proves \eqref{eq: aim} for $X=C_{s,r_1,r_2}$.

\end{enumerate}

\end{proof}

\section{Property $N_p$}
After introducing the two  conditions on toric varieties and verifying them in examples, we turn to the proof of our main result, Theorem \ref{thm: main theorem} on Property $N_p$ for line bundles on toric varieties.

We prove this theorem using Green's criterion (see \cite{Green1984b} for the original statement, and also \cite{Inamdar97, BangereLacini2025} for a careful treatment). We first introduce some notation. Let $2 \le s \le r$ and $1 \le i < j \le r$ be positive integers, and let $X$ be a smooth projective variety of dimension $n$. Denote by
\[
X(r)=X\times \cdots \times X
\]
the product of $r$ copies of $X$. Let
\[
\mathrm{pr}_i:X(r)\to X
\]
and
\[
\mathrm{pr}_{i,j}=\mathrm{pr}_i\times \mathrm{pr}_j:X(r)\to X\times X
\]
be the natural projection maps. Let $\Delta_X$ be the diagonal of $X(2)$. We write
\[
\Delta^{i,j}_{X(r)}=\mathrm{pr}_{i,j}^*\Delta_X
\]
for the diagonal subvariety corresponding to the $i$-th and $j$-th factors.

Set
\[
\Delta^1_{X(r)}(s)=\bigcup_{2\le j\le s}\Delta^{1,j}_{X(r)}.
\]
We also define
\[
\mathcal{I}^1_{\Delta_{X(r)}}(s)
=\mathcal{I}_{\Delta^{1,2}_{X(r)}}\otimes \mathcal{I}_{\Delta^{1,3}_{X(r)}}\otimes\cdots\otimes \mathcal{I}_{\Delta^{1,s}_{X(r)}}.
\]
Clearly,
\[
\mathcal{I}^1_{\Delta_{X(2)}}(2)
=\mathcal{I}_{\Delta^{1,2}_{X(2)}}=\mathcal{I}_{\Delta_X},
\]
and we adopt the convention that
\[
\mathcal{I}^1_{\Delta_{X(r)}}(1)=\mathcal{O}_{X(r)}.
\]
It holds that (see \cite[Corollary 3.5]{BangereLacini2025})
\begin{equation}
    \label{eq: ideal sheaf of union of diagonals}
    \mathcal{I}^1_{\Delta_{X(r)}}(s)
\cong
\mathcal{I}_{\Delta^{1,2}_{X(r)}}\cdots \mathcal{I}_{\Delta^{1,s}_{X(r)}}
=
\mathcal{I}_{\Delta^{1,2}_{X(r)}}\cap\cdots\cap \mathcal{I}_{\Delta^{1,s}_{X(r)}}
=
\mathcal{I}_{\Delta^1_{X(r)}(s)}.
\end{equation}

Green's criterion is as follows.
\begin{proposition}
    \label{prop: Green's criterion}
    Let $X$ be a smooth projective variety and $L$ be a basepoint-free line bundle on $X$. Then $L$ satisfies Property $N_p$ provided that
    \begin{equation}
        \label{eq: cohomology vanishing}
        H^1(X(r), \mathcal{I}^1_{\Delta_{X(r)}}(r)\otimes L^{\otimes k}\boxtimes L^{\boxtimes r-1})=0
    \end{equation}
    for all integers $2\le r\le p+2$ and $k\ge 1$.\qed
\end{proposition}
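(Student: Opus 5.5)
The plan is to translate the vanishing \eqref{eq: cohomology vanishing} into the standard kernel-bundle criterion for Property $N_p$. Let $M_L$ be the kernel of the evaluation map $H^0(X,L)\otimes\mathcal{O}_X\to L$. This map is surjective because $L$ is basepoint-free, so $M_L$ is a vector bundle. I will use the well-known reformulation of Property $N_p$ through Koszul cohomology $K_{i,j}(X,L)$. Via the Koszul complex built from $0\to M_L\to H^0(L)\otimes\mathcal{O}_X\to L\to 0$, one has
\[
K_{i,j}(X,L)\cong \operatorname{coker}\bigl(\wedge^{i+1}H^0(L)\otimes H^0(L^{j-1})\to H^0(\wedge^{i}M_L\otimes L^{j})\bigr),
\]
which injects into $H^1(X,\wedge^{i+1}M_L\otimes L^{j-1})$. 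Property $N_p$ is equivalent to $K_{i,j}=0$ for $j\ge 2$ and $0\le i\le p$. This in turn follows from $H^1(X,\wedge^{a}M_L\otimes L^{k})=0$ for all $1\le a\le p+1$ and $k\ge 1$. In characteristic zero, $\wedge^{a}M_L$ is a direct summand of $M_L^{\otimes a}$, so it suffices to show
\[
H^1(X,M_L^{\otimes(r-1)}\otimes L^{k})=0\qquad\text{for }2\le r\le p+2,\ k\ge 1.
\]

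The core step identifies this group with the group in \eqref{eq: cohomology vanishing} by pushing forward along $\mathrm{pr}_1:X(r)\to X$. First take $r=2$. For $x\in X$, the fibre of $\mathcal{I}_{\Delta_X}\otimes\mathrm{pr}_2^*L$ over $x$ is $\mathcal{I}_x\otimes L$, and $H^0(\mathcal{I}_x\otimes L)=\ker(H^0(L)\to L_x)=M_{L,x}$. So $\mathrm{pr}_{1*}(\mathcal{I}_{\Delta_X}\otimes \mathrm{pr}_2^*L)\cong M_L$; this comes from the exact sequence $0\to\mathcal{I}_\Delta\otimes\mathrm{pr}_2^*L\to\mathrm{pr}_2^*L\to\mathcal{O}_\Delta\otimes\mathrm{pr}_2^*L\to0$ together with surjectivity of $H^0(L)\otimes\mathcal{O}_X\to L$. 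The same sequence gives $R^1\mathrm{pr}_{1*}(\mathcal{I}_{\Delta}\otimes\mathrm{pr}_2^*L)\cong H^1(L)\otimes\mathcal{O}_X$.

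For general $r$, I proceed by induction on the number of factors. I view $X(r)=X(r-1)\times X$ and use \eqref{eq: ideal sheaf of union of diagonals} to write $\mathcal{I}^1_{\Delta_{X(r)}}(r)$ as the product of $\mathcal{I}^1_{\Delta_{X(r-1)}}(r-1)$ with $\mathcal{I}_{\Delta^{1,r}}$. Applying the projection formula and base change along the last factor should then yield
\[
R\mathrm{pr}_{1*}\bigl(\mathcal{I}^1_{\Delta_{X(r)}}(r)\otimes L^{k}\boxtimes L^{\boxtimes r-1}\bigr)\simeq M_L^{\otimes(r-1)}\otimes L^k
\]
in degree $0$, up to correction terms involving $H^1(L)$. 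With the Leray spectral sequence, the $H^1$ in \eqref{eq: cohomology vanishing} then surjects onto, or equals, $H^1(X,M_L^{\otimes (r-1)}\otimes L^k)$. The edge map $H^1(\mathrm{pr}_{1*}\mathcal{F})\hookrightarrow H^1(\mathcal{F})$ is injective in any case, so only the identification of $\mathrm{pr}_{1*}$ is really needed.

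The main obstacle is this identification in the inductive step. The tensor product of ideal sheaves has to agree with the actual product ideal, which is exactly what \eqref{eq: ideal sheaf of union of diagonals} provides. Flatness over the remaining factors must also be checked, so that cohomology and base change applies fibrewise; I would first try to verify that $\mathcal{I}_{\Delta^{1,j}}$ is flat over $X(r)$ minus the $j$-th factor, since it is pulled back from $X\times X$ and flat over the first factor. Once $\mathrm{pr}_{1*}$ is identified with $M_L^{\otimes(r-1)}\otimes L^k$, the injectivity of the Leray edge map completes the argument.
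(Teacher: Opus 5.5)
Your route is correct, and it is the standard argument behind this criterion. The paper gives no proof of its own and simply cites Green, Inamdar and Bangere--Lacini, who argue exactly as you do: $K_{i,j}(X,L)\hookrightarrow H^1(X,\wedge^{i+1}M_L\otimes L^{j-1})$, then $\wedge^{r-1}M_L$ is a direct summand of $M_L^{\otimes(r-1)}$, then $\mathrm{pr}_{1*}\bigl(\mathcal{I}^1_{\Delta_{X(r)}}(r)\otimes L^{\otimes k}\boxtimes L^{\boxtimes r-1}\bigr)\cong M_L^{\otimes(r-1)}\otimes L^{k}$, and finally injectivity of the Leray edge map. You are right to flag that the summand step needs $\operatorname{char}=0$ (or $\operatorname{char}>p+1$). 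That is the setting of the cited sources, but it is narrower than the paper's blanket ``algebraically closed field'' convention, so it should appear as a hypothesis.

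Your hedged inductive step closes without derived base change or ``correction terms''. Let $\pi\colon X(r)\to X(r-1)$ forget the last factor, and set $\mathcal{G}=\mathcal{I}^1_{\Delta_{X(r-1)}}(r-1)\otimes L^{\otimes k}\boxtimes L^{\boxtimes r-2}$. Tensor $0\to\mathcal{I}_{\Delta^{1,r}_{X(r)}}\to\mathcal{O}_{X(r)}\to\mathcal{O}_{\Delta^{1,r}_{X(r)}}\to 0$ with $\pi^*\mathcal{G}\otimes\mathrm{pr}_r^*L$. This stays exact because $\mathcal{O}_{\Delta^{1,r}_{X(r)}}$ is flat over $X(r-1)$. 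Applying $\pi_*$ gives $0\to\pi_*(\cdots)\to\mathcal{G}\otimes H^0(L)\to\mathcal{G}\otimes\mathrm{pr}_1^*L$. The kernel is $\mathcal{G}\otimes\mathrm{pr}_1^*M_L$, since $0\to M_L\to H^0(L)\otimes\mathcal{O}_X\to L\to 0$ is locally split. The projection formula and induction then give $M_L^{\otimes(r-1)}\otimes L^k$. The statement is phrased with the tensor product of ideal sheaves, so \eqref{eq: ideal sheaf of union of diagonals} is not actually needed for this step.
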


\begin{proof}[Proof of Theorem
    \ref{thm: main theorem}]
    By \cite{GZ22}, $L$ is very ample,  hence  in  particular basepoint-free.
    We shall apply Proposition \ref{prop: Green's criterion}. To this end, we fix integers $2\le r\le p+2$ and $k\ge 1$, and aim to show (\ref{eq: cohomology vanishing}).

    First, in \cite{HanlonHicksLazarev2024}, for every toric subvariety $Y$ of a smooth toric variety $X$, the authors construct an explicit resolution of the structure sheaf $\mathcal{O}_Y$, called HHL resolution. Since $\Delta^{1,j}_{X(r)}$ is a toric subvariety of $X(r)$, we can construct the HHL resolution of $\mathcal{O}_{\Delta^{1,j}_{X(r)}}$ denoted by $C_{\bullet}^{1,j}$. Following the construction of the HHL resolution, we have 
\[ C_{s}^{1,j}=\bigoplus_{\substack{\sigma\in S_X\\ \dim(\sigma)=s}}  \mathcal{O}_X(-d_{\sigma})\boxtimes \mathcal{O}_X\boxtimes \cdots \boxtimes \mathcal{O}_X\boxtimes \mathcal{O}_X(-d_{-\sigma})\boxtimes \mathcal{O}_X\boxtimes \cdots \boxtimes \mathcal{O}_X,\]
where $\mathcal{O}_X(-d_{-\sigma})$ lies at $j$th position. Here, $-\sigma$ denotes the stratum $\{-\theta\mid \theta\in \sigma\}$ and $d_{\sigma}$ is given by \eqref{eq: -d(theta)} and \eqref{eq: def of d_sigma}. Note that strata in $S_X$ have dimension at most $n$, so $C_{s}^{1,j}=0$ for $s>n$ or $s<0$. Since $X$ satisfies the uniform unimodularity condition, $S_X$ has only one $0$-dimensional stratum, which is the trivial point $\{0\}$ by Proposition \ref{prop: equivalent condition for trivial 0 stratum}. Hence, $C_{0}^{1,j}=\mathcal{O}_{X(r)}$ for all $j$. Then $C_{\bullet}^{1,j}$  naturally gives rise to a resolution of $\mathcal{I}_{\Delta^{1,j}_{X(r)}}$ by deleting $C_{0}^{1,j}$ and shifting the indices by $-1$. We continue to denote this resolution by $C_{\bullet}^{1,j}$, so
\[ C_{s}^{1,j}=\bigoplus_{\substack{\sigma\in S_X\\ \dim(\sigma)=s+1}}  \mathcal{O}_X(-d_{\sigma})\boxtimes \mathcal{O}_X\boxtimes \cdots \boxtimes \mathcal{O}_X\boxtimes \mathcal{O}_X(-d_{-\sigma})\boxtimes \mathcal{O}_X\boxtimes \cdots \boxtimes \mathcal{O}_X\]
and $C_{s}^{1,j}=0$ for $s>n-1$ or $s<0$.

Now fix a $k\ge 1$. Let 
\[\widetilde{C}_{\bullet}^r=T\paren{\bigotimes_{j=2}^r C_{\bullet}^{1,j} \otimes L^{\otimes k}\boxtimes L^{\boxtimes r-1}}\]
be the total complex, which is a resolution of $\mathcal{I}_{\Delta_{X(r)}}^1(r)\otimes L^{\otimes k}\boxtimes L^{\boxtimes r-1}$ cf.~\cite[Corollary 3.4]{BangereLacini2025}. We have
\[
        \begin{aligned}
            \widetilde{C}^r_s=\bigoplus_{\substack{0\le p_i\le n-1 \\ \sum_{i=1}^{r-1} p_i=s}}\bigoplus_{\substack{\sigma_1,\cdots\sigma_{r-1}\in S_X\\ \dim(\sigma_i)=p_i+1}} & \Big(\mathcal{O}_X(-\sum_{i=1}^{r-1} d_{\sigma_i})\otimes L^k\Big)\\
            & \boxtimes \Big(\mathcal{O}_X(-d_{-\sigma_1})\otimes L\Big) 
    \boxtimes \cdots \boxtimes \Big(\mathcal{O}_X(-d_{-\sigma_{r-1}})\otimes L\Big)
        \end{aligned}
    \]
    To prove 
    \[H^1(X(r), \mathcal{I}^1_{\Delta_{X(r)}}(r)\otimes L^{\otimes k}\boxtimes L^{\boxtimes r-1})=0,\]
    it suffices to show that  
    \begin{equation}
        \label{eq: cohomology vanishing for C^r_s}
        H^{s+1}(X(r), \widetilde{C}^r_s)=0
    \end{equation} 
    for all integers $s\ge 0$.

    In fact, by the K\"unneth formula, we have
    \[
        \begin{aligned}
            H^{s+1}(X(r), \widetilde{C}_s^r)=\bigoplus_{q_1+\cdots +q_r=s+1}\bigoplus_{\substack{0\le p_i\le n-1 \\ \sum_{i=1}^{r-1} p_i=s}}\bigoplus_{\substack{\sigma_1,\cdots,\sigma_{r-1}\in S_X\\ \dim(\sigma_i)=p_i+1}} &\Big(H^{q_1}(\mathcal{O}_X(-\sum_{i=1}^{r-1} d_{\sigma_i})\otimes L^k)\\
        &\otimes H^{q_2}(\mathcal{O}_X(-d_{-\sigma_1})\otimes L)\\
        &\otimes\cdots\otimes H^{q_r}(\mathcal{O}_X(-d_{-\sigma_{r-1}})\otimes L)\Big).
        \end{aligned}
    \]
    By Lemma \ref{le: cohomology vanishing for Thomsen collection}, we have
    \[H^{q_{i+1}}(\mathcal{O}_X(-d_{-\sigma_{i}})\otimes L)=0, \text{ for all } q_{i+1}\ne 0,\ 1\le i\le r-1.\]
Therefore, it remains only to consider the case where $q_1=s+1$ and $q_i=0$ for $2\le i\le r$, namely,
\[
    \begin{aligned}
        H^{s+1}(\widetilde{C}_s^r)=\bigoplus_{\substack{0\le p_i\le n-1 \\ \sum_{i=1}^{r-1} p_i=s}}\bigoplus_{\substack{\sigma_1,\cdots,\sigma_{r-1}\in S_X\\ \dim(\sigma_i)=p_i+1}} &\Big(H^{s+1}(\mathcal{O}_X(-\sum_{i=1}^{r-1} d_{\sigma_i})\otimes L^k)\\
        &\otimes H^{0}(\mathcal{O}_X(-d_{-\sigma_1})\otimes L)\\
        &\otimes\cdots\otimes H^{0}(\mathcal{O}_X(-d_{-\sigma_{r-1}})\otimes L)\Big).
    \end{aligned}
\]
Since $r\le p+2$, the hypothesis $L\cdot C\ge n-1+p$ implies $L\cdot C\ge n+r-3$ for every $T$-invariant curve $C$. Therefore, to prove \eqref{eq: cohomology vanishing for C^r_s}, it suffices to show that, for any $s\ge 0$, any integers $0\le p_1,\cdots,p_{r-1}\le n-1$ satisfying $\sum_{i=1}^{r-1}p_i=s$, and any $\sigma_1,\cdots,\sigma_{r-1}\in S_X$ with $\dim(\sigma_i)=p_i+1$, one has
\begin{equation}
    \label{eq: vanishing in induction}
    H^{s+1}(X,\mathcal{O}_X(-\sum_{i=1}^{r-1} d_{\sigma_i})\otimes L^k)=0.
\end{equation}
Since $X$ is $n$-dimensional, it is enough to consider $0\le s\le n-1$. Note that for each $\sigma_i$, we have $\dim(\sigma_i)\ge 1$. Hence
\[
\sum_{i=1}^{r-2}\dim(\sigma_i)=\sum_{i=1}^{r-1} p_i+r-1-\dim(\sigma_{r-1})=s+r-1-\dim(\sigma_{r-1})\le n+r-3.
\]

Since $X$ satisfies the Thomsen stratification intersection-number condition, using \eqref{eq: simplified intersection number condition}, we have
\[
\mathcal{O}_X\Big(\sum_{i=1}^{r-2} d_{\sigma_i}\Big)\cdot C\le \sum_{i=1}^{r-2} \dim(\sigma_i)\le n+r-3.
\]
Therefore, for any line bundle satisfying $L\cdot C\ge n+r-3$, we obtain that 
\[
\Big(L^k\otimes \mathcal{O}_X(-\sum_{i=1}^{r-2} d_{\sigma_i})\Big)\cdot C\ge 0,
\]
which implies that $L^k\otimes \mathcal{O}_X(-\sum_{i=1}^{r-2} d_{\sigma_i})$ is nef \cite[Theorem 6.3.12]{CoxToric}.
By Lemma~\ref{le: cohomology vanishing for Thomsen collection}, it follows that
\[
H^q\Big(X,\mathcal{O}_X(-d_{\sigma_{r-1}})\otimes L^k\otimes \mathcal{O}_X(-\sum_{i=1}^{r-2} d_{\sigma_i})\Big)=0,\quad q\ge 1,
\]
hence \eqref{eq: vanishing in induction} holds, and so does \eqref{eq: cohomology vanishing for C^r_s}. This completes the proof.
\end{proof}

%Theorem~\ref{thm: main theorem} proves Conjecture~\ref{conj:Song-Zhu} for smooth toric varieties satisfying the uniform unimodularity condition and the Thomsen stratification intersection-number condition.
 We now give a combinatorial version of Theorem~\ref{thm: main theorem}. For toric varieties, the intersection number $L\cdot C$ admits a natural combinatorial interpretation. Given an ample line bundle $L=\mathcal{O}_X(D)$ on a toric variety $X$, recall from \cite{CoxToric} that $L$ corresponds to a lattice polytope $P_D$ in $M_{\mathbb{R}}$. Moreover, the $T$-invariant curves on $X$ are in one-to-one correspondence with the edges of $P_D$; denote by $e$ the edge corresponding to a curve $C$. It is well-known that
\[
L\cdot C=\#(e\cap M)-1.
\]
We call this number the lattice length of the edge $e$ and denote it by $l(e)$. Let $E(P_D)$ denote the set of edges of $P_D$. 
\begin{definition}
    The lattice length of $P_D$ is defined by
    \[
    L(P_D)=\min\{l(e)\mid e\in E(P_D)\}.
    \]
\end{definition}

Thus $L(P_D)$ is the minimum of the intersection numbers of the line bundle $L$ with all $T$-invariant curves.

% \begin{figure}
%     \centering 
%     \includegraphics[width=0.35\textwidth]{image/lattice length of polytope.jpg}
%     \caption{An illustration of a polytope of lattice length $2$}
% \end{figure}

\begin{corollary}
    Let $P\subseteq M_{\mathbb{R}}$ be a smooth $n$-dimensional lattice polytope. If the associated toric variety $X_P$ satisfies the uniform unimodularity condition and the Thomsen stratification intersection-number condition, and if $L(P)\ge n-1+p$, then $P$ satisfies Property $N_p$.
\end{corollary}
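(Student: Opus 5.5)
The corollary is a direct translation of Theorem~\ref{thm: main theorem} into polytope language, so the plan is to pass from $P$ to the pair $(X_P,\mathcal{O}_{X_P}(D_P))$ and check the hypotheses there.

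First, since $P$ is a smooth $n$-dimensional lattice polytope, its normal fan $\Sigma_P$ is smooth and complete. Hence $X_P$ is a smooth projective toric variety of dimension $n$, and $L:=\mathcal{O}_{X_P}(D_P)$ is ample with $P_{D_P}=P$ (up to translation, which changes neither the edges nor their lattice lengths). The uniform unimodularity condition and the Thomsen stratification intersection-number condition are assumed directly for $X_P$. By definition, $P$ satisfies Property $N_p$ exactly when $L$ does.

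Second, we translate the positivity hypothesis. The $T$-invariant curves $C$ on $X_P$ correspond bijectively to the edges $e$ of $P$, and $L\cdot C=l(e)=\#(e\cap M)-1$. The assumption $L(P)\ge n-1+p$ then says that $l(e)\ge n-1+p$ for every edge $e\in E(P)$. This is equivalent to $L\cdot C\ge n-1+p$ for every $T$-invariant curve $C$.

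Third, we apply Theorem~\ref{thm: main theorem} to $X_P$ and $L$. The dimension hypothesis $n\ge 2$ is implicit in the statement, since the theorem is stated for $n\ge2$; if needed, one adds it or notes that the corollary is intended in that range. The theorem gives Property $N_p$ for $L$, hence for $P$.

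There is no real obstacle. The only points needing care are that smoothness of $P$ gives smoothness of $X_P$, so the theorem's standing hypotheses hold, and that the edge–curve correspondence together with the lattice-length formula for $L\cdot C$ is the standard one from \cite{CoxToric}.
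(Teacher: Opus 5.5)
Your proposal is correct and matches the paper's argument: apply Theorem~\ref{thm: main theorem} to $(X_P,\mathcal{O}_{X_P}(D_P))$, using the edge--curve correspondence $L\cdot C=l(e)$, so that $L(P)\ge n-1+p$ is exactly the hypothesis $L\cdot C\ge n-1+p$ for every $T$-invariant curve $C$. Your caveat about $n\ge 2$ is fair, since the paper leaves it implicit; for $n=1$ the statement holds anyway, because $P$ then defines a rational normal curve, whose resolution is linear.
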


\section{Open Questions}
We conclude by discussing two directions suggested by the preceding results.

The first direction concerns the uniform unimodularity condition. This condition is substantially stronger than smoothness for toric varieties. By the results of \cite{Heller1957} and \cite{OxleyWalsh2022}, if $\Sigma$ is a fan satisfying the uniform unimodularity condition, then the number of one-dimensional cones in $\Sigma$ satisfies
\[
|\Sigma(1)|\le n(n+1),
\]
where $n=\operatorname{rank}(N)$, and this bound is sharp. If one identifies $u$ and $-u$ as the same direction, then $\Sigma(1)$ contains at most $n(n+1)/2$ distinct directions. On the other hand, for a toric variety one has
\[
\rho(X_{\Sigma})=|\Sigma(1)|-n,
\]
where $\rho(X_{\Sigma})$ denotes the Picard number of $X_{\Sigma}$. 

Therefore, for toric varieties satisfying the uniform unimodularity condition, the Picard number is bounded above. This naturally leads to the following question:

\begin{problem}
    Can one give a complete classification of proper toric varieties satisfying the uniform unimodularity condition?
\end{problem}

The second direction concerns the Thomsen stratification intersection-number condition. Proposition~\ref{prop: example} shows that this condition holds for the basic families $A_{n,k}$, $B_{n,k}$ and $C_{s,r_1,r_2}$. However, the verification relies on direct, case-by-case computations. At present, no conceptual or general method is known for establishing the condition for an arbitrary smooth toric variety. This motivates the following conjecture.

\begin{conjecture}
    For any smooth proper toric variety, the Thomsen stratification intersection-number condition holds. 
\end{conjecture}

\bibliographystyle{alpha}
\bibliography{reference}

@article{ballard2025kingsconjecturecoxcategory,
title={King's Conjecture and the {C}ox category}, 
author={Matthew R. Ballard and Christine Berkesch and Michael K. Brown and Lauren Cranton Heller and Daniel Erman and David Favero and Sheel Ganatra and Andrew Hanlon and Jesse Huang},
journal={arXiv preprint},
year={2025},
volume={arXiv:2501.00130},
archivePrefix={arXiv},
primaryClass={math.AG},
url={https://arxiv.org/abs/2501.00130}, 
}

@article{BangereLacini2025,
author = {Purnaprajna Bangere and Justin Lacini},
title = {{Syzygies of adjoint linear series on projective varieties}},
volume = {174},
journal = {Duke Mathematical Journal},
number = {3},
publisher = {Duke University Press},
pages = {473--499},
year = {2025},
doi = {10.1215/00127094-2024-0035},
}

@inproceedings{Bondal06,
  title={Derived categories of toric varieties},
  author={Bondal, Alexey},
  booktitle={Convex and Algebraic geometry, Oberwolfach conference reports, EMS Publishing House},
  volume={3},
  pages={284--286},
  year={2006}
}

@book {CoxToric,
    AUTHOR = {Cox, David A. and Little, John B. and Schenck, Henry K.},
     TITLE = {Toric varieties},
    SERIES = {Graduate Studies in Mathematics},
    VOLUME = {124},
 PUBLISHER = {American Mathematical Society, Providence, RI},
      YEAR = {2011},
      ISBN = {978-0-8218-4819-7},
   MRCLASS = {14M25 (05A15 05E45 52B12)},
  MRNUMBER = {2810322},
MRREVIEWER = {Ivan\ Arzhantsev},
       DOI = {10.1090/gsm/124},
       URL = {https://doi.org/10.1090/gsm/124},
}

@book {FultonIntroToToric,
    AUTHOR = {Fulton, William},
     TITLE = {Introduction to toric varieties},
    SERIES = {Annals of Mathematics Studies},
    VOLUME = {131},
 PUBLISHER = {Princeton University Press, Princeton, NJ},
      YEAR = {1993},
     PAGES = {xii+157},
      ISBN = {0-691-00049-2},
   MRCLASS = {14M25 (14-02 14J30)},
  MRNUMBER = {1234037},
MRREVIEWER = {T.\ Oda},
       DOI = {10.1515/9781400882526},
       URL = {https://doi.org/10.1515/9781400882526},
}

@article{GZ22,
author = {José L. González and Zhixian Zhu},
title = {Generation of jets and {F}ujita's jet ampleness conjecture on toric varieties},
journal = {Journal of Pure and Applied Algebra},
volume = {226},
number = {4},
pages = {106873},
year = {2022}
}

@article {HanlonHicksLazarev2024,
    AUTHOR = {Hanlon, Andrew and Hicks, Jeff and Lazarev, Oleg},
     TITLE = {Resolutions of toric subvarieties by line bundles and
              applications},
   JOURNAL = {Forum Math. Pi},
  FJOURNAL = {Forum of Mathematics. Pi},
    VOLUME = {12},
      YEAR = {2024},
     PAGES = {Paper No. e24, 58}
}

@article{Heller1957,
author = {Heller, Isidore},
title = {On linear systems with integral valued solutions},
volume = {7},
journal = {Pacific Journal of Mathematics},
number = {3},
pages = {1351--1364},
year = {1957},
}

@article {HeringSchenckSmith2006,
    AUTHOR = {Hering, Milena and Schenck, Hal and Smith, Gregory G.},
     TITLE = {Syzygies, multigraded regularity and toric varieties},
  JOURNAL = {Compositio Mathematica},
    VOLUME = {142},
      YEAR = {2006},
    NUMBER = {6},
     PAGES = {1499--1506},
      ISSN = {0010-437X,1570-5846},
   MRCLASS = {13D02 (14M25 52B20)},
  MRNUMBER = {2278757},
MRREVIEWER = {P.\ Schenzel},
       DOI = {10.1112/S0010437X0600251X},
       URL = {https://doi.org/10.1112/S0010437X0600251X},
}

@article {Koelman1993,
    AUTHOR = {Koelman, Robert Jan},
     TITLE = {A criterion for the ideal of a projectively embedded toric surface to be generated by quadrics},
  JOURNAL = {Beitr\"age zur Algebra und Geometrie. Contributions to Algebra
              and Geometry},
    VOLUME = {34},
      YEAR = {1993},
    NUMBER = {1},
     PAGES = {57--62},
      ISSN = {0138-4821},
   MRCLASS = {14M25 (14J25)},
  MRNUMBER = {1239278},
MRREVIEWER = {T.\ Oda},
}

@article {Schenck2004,
    AUTHOR = {Schenck, Hal},
     TITLE = {Lattice polygons and {G}reen's theorem},
  JOURNAL = {Proceedings of the American Mathematical Society},
    VOLUME = {132},
      YEAR = {2004},
    NUMBER = {12},
     PAGES = {3509--3512},
      ISSN = {0002-9939,1088-6826},
   MRCLASS = {52B35 (14M25)},
  MRNUMBER = {2084071},
       DOI = {10.1090/S0002-9939-04-07523-9},
       URL = {https://doi.org/10.1090/S0002-9939-04-07523-9}
}

@article {OgataNakagawa2002,
    AUTHOR = {Ogata, Shoetsu and Nakagawa, Katsuyoshi},
     TITLE = {On generators of ideals defining projective toric varieties},
  JOURNAL = {Manuscripta Mathematica},
    VOLUME = {108},
      YEAR = {2002},
    NUMBER = {1},
     PAGES = {33--42},
      ISSN = {0025-2611,1432-1785},
   MRCLASS = {14M25 (52B20)},
  MRNUMBER = {1912946},
MRREVIEWER = {Henry\ K.\ Schenck},
       DOI = {10.1007/s002290200252},
       URL = {https://doi.org/10.1007/s002290200252},
}

@article {Gubeladze2012,
    AUTHOR = {Gubeladze, Joseph},
     TITLE = {Convex normality of rational polytopes with long edges},
  JOURNAL = {Advances in Mathematics},
    VOLUME = {230},
      YEAR = {2012},
    NUMBER = {1},
     PAGES = {372--389},
      ISSN = {0001-8708,1090-2082},
   MRCLASS = {52B20 (14M25)},
  MRNUMBER = {2900547},
MRREVIEWER = {Milena\ S.\ Hering},
       DOI = {10.1016/j.aim.2011.12.003},
       URL = {https://doi.org/10.1016/j.aim.2011.12.003},
}

@article {Green1984,
    AUTHOR = {Green, Mark L.},
     TITLE = {Koszul cohomology and the geometry of projective varieties. {I}},
   JOURNAL = {Journal of Differential Geometry},
    VOLUME = {19},
      YEAR = {1984},
    NUMBER = {1},
     PAGES = {125--171},
      ISSN = {0022-040X,1945-743X},
   MRCLASS = {14F05 (14B12)},
  MRNUMBER = {739785},
MRREVIEWER = {G.\ Horrocks},
}

@article{Green1984b,
     author = {Green, Mark L.},
     title = {Koszul cohomology and the geometry of projective varieties. {II}},
     JOURNAL = {Journal of Differential Geometry},
     volume = {19},
     number = {1},
     year = {1984},
     pages = {279--289}
}

@article{Inamdar97,
  title={On syzygies of projective varieties},
  author={Inamdar, S. P.},
  journal={Pacific Journal of Mathematics},
  volume={177},
  number={1},
  pages={71--76},
  year={1997},
  publisher={Mathematical Sciences Publishers}
}

@article {EinLazarsfeld1993arbitrarydim,
    AUTHOR = {Ein, Lawrence and Lazarsfeld, Robert},
     TITLE = {Syzygies and {K}oszul cohomology of smooth projective
              varieties of arbitrary dimension},
  JOURNAL = {Inventiones Mathematicae},
    VOLUME = {111},
      YEAR = {1993},
    NUMBER = {1},
     PAGES = {51--67},
      ISSN = {0020-9910,1432-1297},
   MRCLASS = {13D02 (14F17 14J60)},
  MRNUMBER = {1193597},
MRREVIEWER = {Gary\ P.\ Kennedy},
       DOI = {10.1007/BF01231279},
}

@article {Koelman1993Generators,
    AUTHOR = {Koelman, Robert Jan},
     TITLE = {Generators for the ideal of a projectively embedded toric
              surface},
  JOURNAL = {The Tohoku Mathematical Journal. Second Series},
    VOLUME = {45},
      YEAR = {1993},
    NUMBER = {3},
     PAGES = {385--392},
      ISSN = {0040-8735,2186-585X},
   MRCLASS = {14M25 (14J25)},
  MRNUMBER = {1231563},
MRREVIEWER = {T.\ Oda},
       DOI = {10.2748/tmj/1178225891},
       URL = {https://doi.org/10.2748/tmj/1178225891},
}

@article{OxleyWalsh2022,
author = {Oxley, James and Walsh, Zach},
title = {2-Modular Matrices},
journal = {SIAM Journal on Discrete Mathematics},
volume = {36},
number = {2},
pages = {1231-1248},
year = {2022},
doi = {10.1137/21M1419131},
URL = {  https://doi.org/10.1137/21M1419131}
}

@article{SWZ24,
      title={On covering simplices by dilations in dimensions 3 and 4}, 
      author={Song,Lei and Wen, Huanqi and Zhu, Zhixian},
      journal={arXiv preprint arXiv:2404.02495},
      year={2024}
}

@article{SWZ26,
      title={The integral closedness of lattice simplices with large lattice length}, 
      author={Lei Song and Huanqi Wen and Zhixian Zhu},
      year={2026},
      JOURNAL = {arXiv preprint},
      volume={arXiv:2606.16348} 
}

\bigskip
\noindent\small{\textsc{School of Mathematics, Sun Yat-sen University\\
W. 135 Xingang Rd., Guangzhou, Guangdong 510275, P.R.~China}\\
\emph{E-mail address}:  \texttt{songlei3@mail.sysu.edu.cn}

\bigskip
\noindent\small{\textsc{School of Mathematics, Sun Yat-sen University\\
W. 135 Xingang Rd., Guangzhou, Guangdong 510275, P.R.~China}\\
\emph{E-mail address}:  \texttt{wenhq7@mail2.sysu.edu.cn}

\end{document}